\newtheorem*{theorem*}{Theorem}
\newtheorem{theorem}{Theorem}[section]
\newtheorem{lemma}[theorem]{Lemma}
\newtheorem{proposition}[theorem]{Proposition}
\newtheorem{corollary}[theorem]{Corollary}
\theoremstyle{definition}
\newtheorem{example}[theorem]{Example}
\newtheorem{definition}[theorem]{Definition}
\newtheorem{remark}[theorem]{Remark}
\numberwithin{equation}{section}
\numberwithin{figure}{section}
\DeclareMathAlphabet{\mathpzc}{OT1}{pzc}{m}{it}
\newcommand{\C}{{\mathbb{C}}}
\newcommand{\Z}{{\mathbb{Z}}}
\newcommand{\R}{{\mathbb{R}}}
\renewcommand{\c}{{\mathpzc{c}}}
\newcommand{\mf}{\mathbf}
\newcommand{\mfc}{\mf{c}}
\newcommand{\mfl}{\boldsymbol{\ell}}
\newcommand{\mfm}{\mf{m}}
\newcommand{\al}{\alpha}
\newcommand{\de}{\delta}
\newcommand{\lee}{\langle}
\newcommand{\ree}{\rangle}
\newcommand{\defi}[1]{\textit{#1}}
\DeclareMathOperator{\SO}{SO}
\DeclareMathOperator{\SL}{SL}
\definecolor{gold}{rgb}{0.85,.66,0}
\definecolor{cherry}{rgb}{0.9,.1,.2}
\definecolor{burgundy}{rgb}{0.8,.2,.2}
\definecolor{orangered}{rgb}{0.85,.3,0}
\definecolor{orange}{rgb}{0.85,.4,0}
\definecolor{olive}{rgb}{.45,.4,0}
\definecolor{lime}{rgb}{.6,.9,0}
\definecolor{green}{rgb}{.2,.7,0}
\definecolor{grey}{rgb}{.4,.4,.2}
\definecolor{brown}{rgb}{.4,.3,.1}
\begin{document}

\title[\tiny{Grossberg--Karshon twisted cubes and hesitant jumping walk avoidance}]{Grossberg--Karshon twisted cubes \\ and hesitant jumping walk avoidance} 

\date{\today}

\author[Eunjeong Lee]{Eunjeong Lee}
\address[E. Lee]{Center for Geometry and Physics, Institute for Basic Science (IBS), Pohang 37673, Republic of Korea}
\email{eunjeong.lee@ibs.re.kr}

\keywords{Grossberg--Karshon twisted cubes, pattern avoidance, character formula, generalized Demazure modules} 
\subjclass[2000]{Primary 20G05; secondary 52B20}

\date{\today}



\begin{abstract}
	Let $G$ be a complex simply-laced semisimple algebraic group of rank $r$ and $B$ a Borel subgroup. Let $\mathbf i \in [r]^n$ be a word  and let $\mfl = (\ell_1,\dots,\ell_n)$ be a sequence of non-negative integers. Grossberg and Karshon introduced a virtual lattice polytope associated to $\mathbf i$ and $\mfl$ called a \defi{twisted cube}, whose lattice points encode the character of a $B$-representation.
	More precisely, lattice points in the twisted cube, counted with sign according to a certain density function, yields the character of the generalized Demazure module determined by $\mathbf i$ and~$\mfl$.
	In recent work, the author and Harada described precisely when the Grossberg--Karshon twisted cube is untwisted, i.e., the twisted cube is a closed convex polytope, in the situation when the integer sequence $\mfl$ comes from a weight $\lambda$ of~$G$. However, not every integer sequence $\mfl$ comes from a weight of $G$. In the present paper, we interpret untwistedness of Grossberg--Karshon twisted cubes associated to any word~$\mathbf i$ and any integer sequence~$\mfl$ using the combinatorics of $\mathbf i$ and $\mfl$. Indeed, we prove that the Grossberg--Karshon twisted cube is untwisted precisely when $\mathbf i$ is  \defi{hesitant-jumping-$\mfl$-walk-avoiding}. 
%
 \end{abstract}

\maketitle



\section*{Introduction} 

Let $G$ be a complex semisimple algebraic group of rank $r$ and $B$ a Borel subgroup.
Formulating a combinatorial model for a basis of a representation provides a fruitful connection between representation theory and algebraic geometry as exhibited by the theory of crystal bases and string polytopes. 
Kaveh~\cite{Kaveh} show that the string polytopes can be obtained as Newton--Okounkov bodies of the flag variety~$G/B$, and this association is extended to the generalized string polytopes and Bott--Samelson varieties by  Fujita~\cite{Fujita}. Furthermore, using the result of Anderson~\cite{Anderson}, for this case there is a  toric degeneration of Bott--Samelson variety to a toric variety whose Newton polytope is the generalized string polytope. 

On the other hand, Grossberg and Karshon \cite{Grossberg-Karshon} also
constructed  one-parameter family of complex structures on Bott--Samelson varieties which makes the Bott--Samelson varieties into toric varieties, called Bott manifolds, and consequently obtained a Demazure-type character
formula which can be interpreted combinatorially in terms of 
\emph{twisted cubes}. This degeneration of complex structures can be interpreted as the toric degeneration of Bott--Samelson variety to a Bott manifold by Pasquier~\cite{Pasquier}. Indeed, there is a flat family $\mathfrak{X}$ over $\C$ such that $\mathfrak{X}(t)$ is isomorphic to the Bott--Samelson variety for all $t  \in \C \setminus \{0\}$ and $\mathfrak{X}(0)$ is a Bott manifold. This connection is generalized to flag Bott--Samelson varieties and flag Bott manifolds in~\cite{FLS}.

These twisted cubes are combinatorially much
simpler than generalized string polytopes but they are not \textit{actual}
polytopes in the sense that they may not be convex nor closed and the intersection of faces may not be a face (cf. \cite[\S 2.5 and Figure~1 therein]{Grossberg-Karshon} and Figure~\ref{figure:first example}). More precisely, a Grossberg--Karshon
twisted cube is a pair $(C = C(\mfc, \mfl), \rho)$, where $C$ is a subset of $\R^n$ and
$\rho$ is a density function whose support is  $C$, taking
values in $\{\pm 1\}$. The defining parameters $\mfc = \{c_{jk}\}_{1 \leq j < k \leq n}$ and $\mfl = (\ell_1, \ldots, \ell_n)$ are fixed constants 
with $c_{jk} \in \Z$ and $\ell_j \in \R$.

The main result of this paper concerns twisted cubes obtained from representation-theoretic data. Indeed, we consider a (not necessarily reduced) word decomposition $\mathbf i = (i_1,\dots,i_n) \in [r]^n$ of an element $s_{i_1} \cdots s_{i_n}$ in the Weyl group $W$ of $G$ and non-negative integers $\mfm = (m_1,\dots,m_n)$. Here, $[r] \coloneqq \{1,\dots,r\}$.
In this situation, the sequence $\mathbf i$ and non-negative integers~$\mfm$ define a Bott--Samelson variety $Z_{\mf i}$ and the line bundle $\mathcal L_{\mathbf i, \mfm}$ on it. Moreover, the associated Grossberg--Karshon twisted cube $(C(\mfc(\mathbf i), \mfl(\mathbf i, \mfm)),\rho)$ encodes the character of $B$-representation space $H^0(Z_{\bf i}, \mathcal L_{\mathbf i, \mfm})$ of holomorphic sections. Here, the integers $\mfc(\mathbf i)$ and $\mfl(\mathbf i, \mfm)$ are determined by $\mathbf i$ and $\mfm$ (see Section~\ref{sec:walks} for more details).

In this paper, we present a necessary and sufficient conditions on $\bf i$ and $\mfl$ such that the associated Grossberg--Karshon twisted cube is untwisted (see Definition~\ref{definition:untwisted}), i.e., $C(\mfc, \mfl)$ is a closed convex polytope and the density function is equal to $1$ on $C(\mfc, \mfl)$, so that the Grossberg--Karshon character formula is a purely combinatorial positive formula. In other words, there is no minus sign in the formula. 

In order to introduce our result, we prepare some terminology (see Section~\ref{sec:walks} for precise definitions). We say a word $\mathbf i = (i_1,\dots,i_n)$ is a \defi{jumping walk} if for each $ 1 \leq j \leq n$, the set $\{i_1,\dots,i_{j-1}\}$ and an element $i_j$ are adjacent in the Dynkin diagram, i.e., the distance $d(\{i_1,\dots,i_{j-1}\}, i_j) = \min\{ d(i_k,i_j) \mid k=1,\dots,j-1\}$ is one. Here, $d(a,b)$ is the distance of two nodes $a$ and $b$ on the Dynkin diagram. Therefore, to make a jumping walk, 
one can \textit{jump} for the next step, but cannot away far. For example, in type $A_5$
\begin{center}
	\begin{tikzpicture}[scale=.43]
\foreach \x in {0,...,4}
\draw[xshift=\x cm] (\x cm,0) circle (.1cm) node (\x) {};
\foreach \y in {0.05,...,3.05}
\draw[xshift=\y cm] (\y cm,0) -- +(1.8 cm,0);
\node [below] at (0,-.2) {\small{$1$}};
\node [below] at (2,-.2) {\small{$2$}};
\node [below] at (4,-.2) {\small{$3$}};
\node [below] at (6,-.2) {\small{$4$}};
\node [below] at (8,-.2) {\small{$5$}};

\path[->, red, thick] (2) edge [bend right] (1) {}
(1) edge [bend right] (0) {}
(0) edge [bend left] (3) {}
(3) edge [bend left] (4) {};

\end{tikzpicture}
\end{center}
the word $\mathbf i = (1,3,2,4)$ is not a jumping walk since $d(1,3) =2$, but $\mathbf i = (3,2,1,4,5)$ is a jumping walk. In the above diagram, one can see the jumping walk $(3,2,1,4,5)$. The word $\mathbf i = (i_1, i_2,\dots,i_n)$ is a \textit{hesitant jumping $\mfl$-walk} if $i_1 = i_2$, the subword $(i_2,\dots,i_n)$ is a jumping walk, and the integers satisfies an inequality $\ell_1 - \ell_2 < \ell_2 + \cdots + \ell_n$. Finally, we say that $\mathbf i$ is hesitant-jumping-$\mfl$-walk-avoiding if there is no subword which is a hesitant jumping $\mfl$-walk. Now we state our main theorem.
\begin{theorem*}[{Theorem~\ref{theorem:main}}]
	Let $G$ be a  complex simply-laced semisimple algebraic group of rank $r$.
	Let $\mathbf i = (i_1,\dots,i_n) \in [r]^n$ and $\mfl = (\ell_1,\dots,\ell_n) \in \Z_{\geq 0}^n$. 
	Then the corresponding Grossberg--Karshon twisted cube is untwisted if and only if $\mathbf i$ is hesitant-jumping-$\mfl$-walk-avoiding. 
\end{theorem*}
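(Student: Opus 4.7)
The plan is to unwind the iterative definition of the twisted cube $C(\mfc(\mathbf{i}), \mfl)$ step-by-step and translate the inequalities that characterize untwistedness at each step into combinatorial statements about subwords of $\mathbf{i}$.

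First, I would recall the inductive construction of the twisted cube: proceeding along the word, at step $j$ one extends the already-built region $C_{j-1}$ by attaching an interval of the form $\{0 \le x_j \le A_j(x_1, \ldots, x_{j-1})\}$, where $A_j = \ell_j + \sum_{k<j} c_{kj} x_k$ is an affine-linear function whose coefficients $c_{kj}$ are built from Cartan integers and depend on the interaction of $i_k$ and $i_j$ along the word. In the simply-laced setting these coefficients are elementary and encode precisely whether $i_k$ and $i_j$ are equal, adjacent, or non-adjacent in the Dynkin diagram. Untwistedness is then equivalent to the condition that the minimum of $A_j$ over $C_{j-1}$ is non-negative for every $j$.

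For the implication ``hesitant-jumping-$\mfl$-walk-avoiding $\Rightarrow$ untwisted,'' I would argue by induction on $j$, analyzing where the minimum of $A_j$ on $C_{j-1}$ is attained. Under the inductive hypothesis that $C_{j-1}$ is a genuine convex polytope, the minimum is attained at a vertex, and every vertex corresponds to a string of extremal choices $x_{k_p}=0$ or $x_{k_p}=A_{k_p}$. The key combinatorial lemma to establish is a telescoping identity: when one follows a maximal chain of extremal-$A_{k_p}$ choices along indices $k_1 < k_2 < \cdots < k_s$ whose labels $(i_{k_1}, \ldots, i_{k_s})$ form a jumping walk, the Cartan-integer contributions along the chain collapse, so that the value of $A_j$ at that vertex reduces to an expression of the form $\ell_j - (\ell_{k_1} + \ell_{k_2} + \cdots + \ell_{k_s})$ in the case that $i_{k_1} = i_j$. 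Negativity of this quantity then says precisely that $(i_j, i_{k_1}, \ldots, i_{k_s})$ is a hesitant jumping $\mfl$-walk subword, contradicting the avoidance hypothesis. A parallel case analysis is needed to verify that vertices whose active-index pattern is \emph{not} a jumping walk produce either zero contributions (because an adjacency is missing) or cancellations that keep $A_j$ non-negative.

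For the converse ``not avoiding $\Rightarrow$ twisted,'' given a subword $(i_{j_0}, i_{j_1}, \ldots, i_{j_s})$ realizing a hesitant jumping $\mfl$-walk, I would construct an explicit vertex of the partial cube by setting $x_{j_p} = A_{j_p}$ for $p=1,\dots,s$ and $x_k = 0$ for all other $k < j_0$. Applying the same telescoping identity, the evaluation $A_{j_0}$ at that vertex equals $\ell_{j_0} - (\ell_{j_1} + \cdots + \ell_{j_s})$, which is strictly negative by the defining inequality of a hesitant jumping $\mfl$-walk, so twistedness is witnessed.

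The hardest step will be the telescoping identity at the core of the argument: I must show both that along a jumping-walk subchain the recursive substitutions $x_{k_p} = A_{k_p}$ combine in the claimed clean form, and that every way a vertex can deviate from a jumping-walk pattern --- either a non-adjacent ``jump'' or a repeat of a previously used index --- either kills an intermediate coordinate or produces a sign-cancellation that preserves $A_j \ge 0$. This demands careful bookkeeping of the Cartan contributions along the word, especially in separating the initial hesitant repetition $i_{j_0} = i_{j_1}$ from the subsequent jumping-walk tail, and it is where the full simply-laced hypothesis is genuinely needed.
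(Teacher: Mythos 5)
Your overall strategy --- detecting twistedness by evaluating the coordinate functions $A_j$ at the vertices determined by extremal binary choices at each coordinate, and translating the Cartan-integer bookkeeping along such a chain into jumping-walk combinatorics --- is the same strategy the paper takes. The paper packages the vertex check via the Harada--Yang criterion (Theorem~\ref{thm-HY}): untwistedness is equivalent to non-negativity of the Cartier data $m_{\sigma,j}$, and those $m_\sigma$ are precisely the candidate vertices your induction would examine. Your ``telescoping identity'' is exactly the content of Proposition~\ref{lemma_connected_hesitant} and Lemma~\ref{lemma_distance_and_sum_of_C_integers}, which show that the partial sums of Cartan integers along a jumping walk collapse to $-1$ at each step.

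However, your core telescoping formula at the hesitant step is off by a summand, and this is precisely where the computation has to be done carefully. You claim that at the constructed vertex, when $i_{j_0}=i_{j_1}$ and $(i_{j_1},\dots,i_{j_s})$ is a jumping walk, the value of $A_{j_0}$ reduces to $\ell_{j_0}-(\ell_{j_1}+\cdots+\ell_{j_s})$. The correct value is
\[
m_{\sigma,j_0}=\ell_{j_0}-\ell_{j_1}-(\ell_{j_1}+\cdots+\ell_{j_s}),
\]
which carries an extra $-\ell_{j_1}$: the repetition $i_{j_0}=i_{j_1}$ makes $c_{j_0,j_1}=\langle\alpha_{i_{j_1}},\alpha_{i_{j_0}}^{\vee}\rangle=2$, not $-1$, so the first substitution contributes twice. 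Your formula would give the negativity criterion $\ell_{j_0}<\ell_{j_1}+\cdots+\ell_{j_s}$, whereas the correct one --- and the one appearing in the definition of a hesitant jumping $\mfl$-walk --- is $\ell_{j_0}-\ell_{j_1}<\ell_{j_1}+\cdots+\ell_{j_s}$, i.e.\ $\ell_{j_0}<2\ell_{j_1}+\ell_{j_2}+\cdots+\ell_{j_s}$. These are genuinely different: for $G=\SL(3,\C)$, $\mathbf i=(1,2,1)$, $\mfl=(2,1,2)$ (the very example drawn in Figure~\ref{fig_TC_121_example}), one has $\ell_1=2\not<\ell_3=2$, so your criterion declares the cube untwisted, but in fact $\ell_1-\ell_3=0<\ell_3=2$ and the cube is twisted. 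Separately, note that the paper's $A_j(x)=\ell_j-\sum_{k>j}c_{jk}x_k$ depends on the \emph{later} coordinates $x_{j+1},\dots,x_n$, so the recursion runs from $x_n$ down to $x_1$; the expression $A_j=\ell_j+\sum_{k<j}c_{kj}x_k$ in your sketch has both the direction of dependence and the sign reversed, and would need to be corrected before the vertex computations can be carried out.
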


We note that if we consider the situation when the line bundle $\mathcal L_{\mathbf i, \mfm}$ comes from the line bundle $\mathcal L_{\lambda}$ of $G/B$, the untwistedness of the corresponding Grossberg--Karshon twisted cube can be detected using \defi{hesitant $\lambda$-walk avoidance} by Harada and the author~\cite{HaradaLee}. The Picard number of the Bott--Samelson variety $Z_{\mathbf i}$ is $n$, the length of the sequence $\mathbf i$, but on the other hand, that of the flag variety $G/B$ is $r$, the rank of the group $G$. Accordingly, not every line bundle over the Bott--Samelson variety comes from the line bundle $\mathcal L_{\lambda}$ of $G/B$ usually. Since our main result can be applied to any line bundles over $Z_{\mathbf i}$, this result is more powerful than the previous one~\cite{HaradaLee} (see Remark~\ref{rmk_gDemazure_module} and Corollary~\ref{corollary_HL}).

Additionally, for given $\mathbf i \in [r]^n$ and $\mfm$, with an appropriate choice of a valuation $\nu$ on the function field $\C(Z_{\mathbf i})$, Harada and Yang~\cite{Harada-Yang:2014a} construct a Newton--Okounkov body $\Delta = \Delta(Z_{\mathbf i}, \mathcal L_{\mathbf i, \mfm}, \nu)$ of the Bott--Samelson variety $Z_{\mathbf i}$. They proved that when the twisted cube $(C(\mfc(\mathbf i), \mfl(\mathbf i, \mfm)),\rho)$  is untwisted, then the Newton--Okounkov body~$\Delta$ and the twisted cube $C(\mfc(\mathbf i), \mfl(\mathbf i, \mfm))$ are the same (up to certain coordinate changes). Our result presents a sufficient condition on $\mathbf i$ and $\mfm$ so that the Newton--Okounkov body $\Delta$ coincides with the twisted cube.

This paper is organized as follows. In Section~\ref{sec:background}, we
recall the necessary definitions and establish terminology and
notation. In Section~\ref{sec:walks}, we introduce the notions of
jumping walks, hesitant jumping walks, and hesitant-jumping-$\mfl$-walk-avoidance. Using this
terminology we then make the statement of our main result, which is
that untwistedness is equivalent to hesitant-jumping-$\mfl$-walk-avoidance. 
The proof of the main result occupies Section~\ref{sec:proof}.

\medskip

\noindent \textbf{Acknowledgements.} 
The author thanks Professor Megumi Harada for bring her this question, and thanks Professor  Dong Youp Suh for his supports throughout the project.
The author was supported by Basic Science 
Research Program through the National Research Foundation of Korea (NRF) 
funded by the Ministry of  Science, ICT \& Future Planning 
(No. 2016R1A2B4010823) and IBS-R003-D1.

\section{Background on Grossberg--Karshon twisted cubes}\label{sec:background} 
We begin by recalling the definition of \textit{twisted cubes} introduced
by Grossberg and Karshon \cite[\S 2.5]{Grossberg-Karshon}. 
Let $n$ be a fixed positive integer.  A twisted
cube is defined to be a pair $(C(\mfc, \mfl), \rho)$ where $C(\mfc, \mfl)$
is a subset of $\R^n$ and $\rho: \R^n \to \R$ is a density function
with support equals to $C(\mfc, \mfl)$. Here, $\mfc =
\{c_{jk}\}_{1 \leq j < k \leq n}$ and
$\mfl = (\ell_1, \ell_2, \ldots, \ell_n)$ are fixed integers. 
In order to simplify the notation in what follows, we define
the following functions on $\R^n$: 
\begin{equation}\label{eq:def-A} 
\begin{split} 
A_n(x) = A_n(x_1, \ldots, x_n) & = \ell_n, \\
A_j(x) = A_j(x_1, \ldots, x_n) & = \ell_j - \sum_{k > j} c_{jk} x_k
\textup{ for all } 1 \leq j \leq n-1. 
\end{split} 
\end{equation}
We also define a function $\textup{sgn} \colon \R \to \{\pm 1\}$ by
$\textup{sgn}(x) = 1$ for $x<0$ and $\textup{sgn}(x) = -1$ for $x
\geq 0$. 

We now give the definition of twisted cubes. 

\begin{definition}\label{definition:twisted cube}
Let $n, \mfc, \mfl$ and $A_j$ be as above. Let $C(\mfc, \mfl)$ denote
the following subset of $\R^n$: 
\begin{equation} \label{eq:defC}
\begin{split}
  C(\mfc, \mfl) \coloneqq &\{ x = (x_1, \ldots, x_n) \in \R^n \mid \\
  &\qquad A_j(x) < x_j < 0 \textup{ or } 0 \leq
  x_j \leq A_j(x)  \quad \text{ for } 1 \leq j \leq n \}. 
\end{split}
\end{equation}
Moreover we define a density function $\rho \colon \R^n \to \R$ by 
\begin{equation}
  \label{eq:def-rho}
  \rho(x) =
  \begin{cases}
    (-1)^n \prod_{k=1}^n \textup{sgn}(x_k) & \textup{ if } x \in
    C(\mfc, \mfl), \\
   0 & \textup{ else.} 
  \end{cases}
\end{equation}
Obviously, the support $\textup{supp}(\rho)$ of the density function $\rho$ is $C(\mfc, \mfl)$. We call the pair
$(C(\mfc, \mfl), \rho)$ the \textit{twisted cube associated to $\mfc$
  and $\mfl$.} 
\end{definition} 

A twisted cube may not be combinatorially equivalent to a cube $[0,1]^n$ in the standard sense. In particular, the set $C(\mfc, \mfl)$ may be neither convex
nor closed, as the following example shows. See also the discussion in \cite[\S 2.5]{Grossberg-Karshon}.

\begin{example}
  Let $n=2$, $\mfl = (\ell_1 = 2, \ell_2 = 3)$ and $\mfc =
  \{c_{12} = 1\}$. Then $C \coloneqq C(\mfc, \mfl)$ is a subset of $\R^2$ consists of points $(x_1,x_2)$ satisfying
\[
\begin{split}
& 0 \leq x_2 \leq 3, \\
2 - x_2 < x_1 < 0 \text{ or }&  0 \leq x_1 \leq 2 - x_2.
\end{split}
\]
See Figure~\ref{figure:first example} for the set $C$. The value of the density function $\rho$ is recorded within each region.

\setlength{\unitlength}{0.5cm}
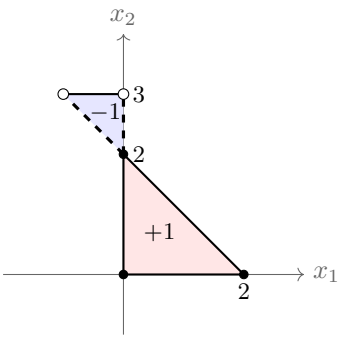
\begin{figure}[h]
\centering
\begin{tikzpicture}[scale = 0.8]
\draw[->, color=black!60!white] (-2,0)--(3,0) node [right] {$x_1$};
\draw[->, color=black!60!white] (0,-1)--(0,4) node [above] {$x_2$};

\fill[fill=blue!10!white] (0,2)--(0,3)--(-1,3)--cycle;
\filldraw[fill=red!10!white, thick] (0,0)--(2,0)--(0,2)--cycle;

\draw[very thick, dashed] (0,2)--(0,3);
\draw[very thick, dashed] (0,2)--(-1,3);
\draw[thick] (-1,3)--(0,3);

\draw[fill=black] (2,0) circle (2pt) ;
\draw[fill=black] (0,0) circle (2pt);
\draw[fill=black] (0,2) circle (2pt);
\draw[fill=white] (0,3) circle (2.5pt);
\draw[fill=white] (-1,3) circle (2.5pt);

\node at (0.6,0.7) {\small $+1$};
\node at (-0.3, 2.7) {\small $-1$};

\node[below] at (2,0) {\small $2$};
\node[right] at (0,2) {\small $2$};
\node[right] at (0,3) {\small $3$};
\end{tikzpicture}
\captionof{figure}{A twisted cube.}\label{figure:first example}

\end{figure}

Note that the subset $C$ does \emph{not} contain the points $\{ (0, x_2) \mid 2 <
x_2 < 3 \}$ and the points $\{ (x_1, x_2) \mid  2 < x_2 < 3
\textup{ and } x_1 =2  - x_2 \}$, so $C$ is neither closed nor convex.
\end{example}

As mentioned in the introduction, the primary goal of this manuscript is to give necessary and sufficient conditions for the \emph{untwistedness} of the twisted cube, in terms of the combinatorics of the defining parameters. The following makes this notion precise. 

\begin{definition}[{cf. \cite[Definition 2.2]{HaradaYang15}}]\label{definition:untwisted} 
We say that 
Grossberg--Karshon twisted cube $(C=C(\mfc, \mfl), \rho)$ is
\textit{untwisted} if 
$C$ is a closed convex
polytope, and the density function $\rho$ is
constant and equal to $1$ on $C$ and $0$ elsewhere. 
\end{definition}

The main result of \cite{HaradaYang15} characterizes the untwistedness of the Grossberg--Karshon twisted cube in terms of the basepoint-freeness of a certain toric divisor on a toric variety constructed from the data of $\mfc$ and $\mfl$.
In particular, their result can be stated in terms of the Cartier data $\{m_{\sigma}\}$ associated to the divisor on the toric variety. Before reviewing the relevant result from~\cite{HaradaYang15} we introduce some terminology.

Let $\{e_1^+,\dots,e_n^+\}$ be the standard basis of $\R^n$. 
For $\sigma = (\sigma_1,\dots,\sigma_n) \in%
\{+,-\}^n$, define $m_{\sigma} = (m_{\sigma,1}, \ldots, m_{\sigma,n}) = \sum_{j=1}^n m_{\sigma,j} e_j^+ \in \Z^n$ as 
follows. 
\begin{equation}\label{eq:def m_sigma}
m_{\sigma,j} = 
\begin{cases}
0 & \textrm{ if } \sigma_j = +, \\
A_j(m_{\sigma,j+1},\dots,m_{\sigma,n}) & \textrm{ if } \sigma_j = -.
\end{cases}
\end{equation}
Here, $A_{j}(x)$ are the functions in~\eqref{eq:def-A}. 
With this notation, we recall the following.

\begin{theorem}[{cf. \cite[Proposition 2.1]{HaradaYang15}}]\label{thm-HY} 
  Let $n, \mfc$ and $\mfl$ be as above and let $(C(\mfc, \mfl),\rho)$ denote the
  corresponding Grossberg--Karshon twisted cube. 
Then $(C(\mfc, \mfl), \rho)$ is untwisted if and only if 
$m_{\sigma, j} \geq 0$ for all $\sigma \in \{+,-\}^n$ and for 
all $j$ with $1 \leq j \leq n$.  
\end{theorem}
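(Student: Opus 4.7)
The plan is to induct on $n$, using a slicing argument. For the base case $n = 1$, one checks directly: $C(\mfc,\mfl)$ equals $[0,\ell_1]$ with density $1$ when $\ell_1 \geq 0$ (untwisted), and the open interval $(\ell_1, 0)$ with density $-1$ when $\ell_1 < 0$ (twisted), matching the only nontrivial non-negativity condition $m_{(-),1} = \ell_1 \geq 0$. For the inductive step, the crucial observation is a slicing identification: fixing $x_n = t$ in $C(\mfc,\mfl)$ produces the $(n-1)$-dimensional twisted cube $C(\mfc', \mfl'(t))$ with $\mfc' = \{c_{jk}\}_{j < k < n}$ and $\mfl'(t) = (\ell_j - c_{jn}\,t)_{j < n}$. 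Because the recursion~\eqref{eq:def m_sigma} is affine in the entries of $\mfl$, the lattice points $m'_\tau(t)$ of the slice depend affinely on $t$; specializing, at $t = 0$ they coincide with the first $n - 1$ coordinates of $m_{(\tau,+)}$, and at $t = \ell_n$ with those of $m_{(\tau,-)}$. A direct sign computation also shows that the $n$-dimensional density at a point $(x_1, \ldots, x_{n-1}, t) \in C(\mfc,\mfl)$ equals $-\textup{sgn}(t)$ times the $(n-1)$-dimensional density of $(x_1, \ldots, x_{n-1})$ on the slice.

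For the forward direction, suppose $(C(\mfc,\mfl),\rho)$ is untwisted and nonempty. Closedness of $C$ combined with the $j = n$ defining constraint ($\ell_n < x_n < 0$ or $0 \leq x_n \leq \ell_n$) forces $\ell_n \geq 0$, since otherwise $x_n$ would be trapped in an open interval. Each slice at $t \in [0, \ell_n]$ is then a closed convex polytope of density $1$, i.e., an untwisted $(n-1)$-dimensional cube, so by the inductive hypothesis all of its $m'_\tau$'s are non-negative. Specializing at $t = 0$ and $t = \ell_n$ and using the endpoint identification gives $m_{(\tau,+),j} \geq 0$ and $m_{(\tau,-),j} \geq 0$ for all $j < n$; combined with $m_{\sigma,n} \in \{0, \ell_n\} \subseteq \R_{\geq 0}$, this yields $m_{\sigma,j} \geq 0$ for every $\sigma$ and $j$.

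For the backward direction, applying the hypothesis to $\sigma = (+, \ldots, +, -)$ gives $\ell_n \geq 0$, while the endpoint identification furnishes $m'_\tau \geq 0$ at both $t = 0$ and $t = \ell_n$. Affine interpolation in $t$ extends this to $m'_\tau(t) \geq 0$ for every $t \in [0, \ell_n]$, and the inductive hypothesis applied to each slice produces an untwisted $(n-1)$-dimensional cube $P'(t) = \{y \in \R^{n-1} : 0 \leq y_j \leq A'_j(y)\}$ with uniform density $1$. Sweeping over $t \in [0, \ell_n]$ then realizes $C(\mfc,\mfl)$ as the closed convex polytope $P = \{x \in \R^n : 0 \leq x_j \leq A_j(x) \text{ for all } j\}$, on which every coordinate is non-negative and the density consequently equals $(-1)^n \prod_k \textup{sgn}(x_k) = 1$. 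The main technical subtlety will be the careful bookkeeping of signs in the slicing identity, both to verify that each slice inherits untwistedness in the forward direction and to recover the correct density on $P$ in the backward direction.
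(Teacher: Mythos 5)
The paper does not actually prove this statement: it is quoted (``cf.'') from Harada--Yang \cite[Proposition 2.1]{HaradaYang15}, where the $m_\sigma$ arise as the Cartier data of the toric divisor $D(\mfc,\mfl)$ on the Bott manifold and untwistedness is matched with basepoint-freeness of that divisor. Your slicing induction is therefore a genuinely different, purely elementary route (and it would even cover real $\mfl$), and its skeleton is correct: the slice of $C(\mfc,\mfl)$ at $x_n=t$ is $C(\mfc',\mfl'(t))$ with $\mfl'(t)=(\ell_j-c_{jn}t)_{j<n}$, the vectors $m'_\tau(t)$ depend affinely on $t$ and specialize at $t=0$ and $t=\ell_n$ to the truncations of $m_{(\tau,+)}$ and $m_{(\tau,-)}$, and the densities are related by the factor $-\textup{sgn}(t)$, which is $+1$ on $t\ge 0$. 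The forward direction goes through essentially as you describe.

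There is, however, a gap in the backward direction. Your inductive hypothesis, as stated, only yields that each slice is \emph{untwisted} in the sense of Definition~\ref{definition:untwisted}, i.e., it is \emph{some} closed convex polytope on which the density is $1$; it does not yield the explicit description $P'(t)=\{y:\ 0\le y_j\le A'_j(y)\ \text{for all } j\}$ that you then use. Density $\equiv 1$ does not a priori exclude points using an even number of negative branches $A'_j<y_j<0$ (such points still carry density $+1$), so ``untwisted'' and ``equals the all-nonnegative region'' are not interchangeable without an argument. And you really need the explicit description: a union over $t\in[0,\ell_n]$ of unspecified closed convex slices need not be closed or convex, so you cannot conclude that $C=\{x:\ 0\le x_j\le A_j(x)\ \text{for all }j\}$ from untwistedness of the slices alone. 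The standard repair is to strengthen the statement you induct on: prove that $m_{\sigma,j}\ge 0$ for all $\sigma,j$ implies $C(\mfc,\mfl)=\{x:\ 0\le x_j\le A_j(x)\ \text{for all }j\}$ (which is then automatically a compact convex polyhedron with $\rho\equiv 1$, hence untwisted); this strengthened assertion passes through your slicing argument verbatim. A smaller point in the forward direction: ``$x_n$ trapped in an open interval'' does not by itself contradict closedness, since a closed subset of an open interval is possible; you should add that for $\ell_n<0$ every slice at $t\in(\ell_n,0)$ is nonempty (choose coordinates greedily from $x_{n-1}$ downward), so the projection of the compact set $C$ to the $x_n$-axis would be the full open interval $(\ell_n,0)$, a contradiction.
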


 \begin{remark}[{cf. \cite[Definition 2.1, Lemma 2.2]{HaradaYang15}}]
	Define
	\[
	e_j^- \coloneqq -e_j^+ - \sum_{k >j} c_{jk}e_k^+  \textrm{ for } 1 \leq j \leq n.
	\]
	Let $\Sigma(\mfc)$ be the fan consisting of maximal cones 
	generated by $\{ e_{j}^{\sigma_j} \mid 1 \leq j \leq n \}$ for each $\sigma = (\sigma_1,\dots,\sigma_n) \in \{+,-\}^n$. Then the toric variety $X(\Sigma(\mfc))$ constructed by the fan $\Sigma(\mfc)$ is called a \defi{Bott manifold}.
	For the torus-invariant divisor 
	\[
	D(\mfc, \mfl) \coloneqq  \sum_{j = 1}^r \ell_j D_{e_j^-},
	\]
	the Cartier data for $D(\mfc, \mfl)$ is same as $\{m_{\sigma}\}$. Here, $D_{e_j^-}$ be the toric-invariant divisor corresponding to the 
	ray spanned by $e_j^-$ for $1 \leq j \leq n $. 
\end{remark}

Recall that we will study the case when the
defining parameters for the Grossberg--Karshon twisted cube arise
from certain representation-theoretic data. We now briefly describe
how to derive the $\mfc$ and $\mfl$ in this case. 

Following the setting in~\cite{Grossberg-Karshon}, let $G$ be a complex semisimple
linear algebraic group
of rank $r$ over $\C$. 
Choose a Cartan subgroup $H\subset G$, and let $\mathfrak{g} = \mathfrak{h} \oplus \bigoplus_{\alpha} \mathfrak{g}_{\alpha}$ be the decomposition into root spaces.
We choose a set $\Delta^+$ of positive roots, and let $B$ be the Borel subgroup whose Lie algebra is $\mathfrak{h} \oplus \bigoplus_{\alpha \in \Delta^+} \mathfrak{g}_{-\alpha}$.
Let $\{\al_1,\ldots, \al_r\}$ denote the simple roots,
$\{\al_1^{\vee}, \ldots, \al_r^{\vee}\}$ the coroots, 
and $\{\varpi_1, \ldots, \varpi_r\}$ the fundamental weights. Note that fundamental weights are characterized by the relation $\langle\varpi_i,\al_j^{\vee}\rangle=\de_{ij}$. Let $W$ be the Weyl group of $G$ and $s_{\alpha} \in
W$ denote the simple reflection in $W$ corresponding to the
root $\alpha$. For simplicity, we denote $s_i$ for the reflection $s_{\alpha_i}$ corresponding to the simple root $\alpha_i$.

Let $\mathbf i = (i_1,\dots,i_n)$ be a sequence of elements in $[r] $ and $\mfm = (m_1,\dots,m_n) \in \Z_{\geq 0}^n$.
Then $\mathbf i$ corresponds to a decomposition of an element $w = s_{i_1} s_{i_2} \cdots s_{i_n}$ in $W$ which is not necessarily reduced. 
For such $\mathbf i$ and $\mfm$, we define constants $\mfc(\mathbf i)  = \{c_{jk}\}_{1 \leq j < k \leq n}$ and $\mfl(\mathbf i, \mfm)  = (\ell_1,\dots,\ell_n)$ by the formulas in~\cite[\S3.7]{Grossberg-Karshon}
\begin{equation}\label{eq:def cjk rep}
c_{jk}=\lee \al_{i_k}, \al_{i_j}^{\vee}\ree \quad \text{ for }1 \leq j < k \leq n,
\end{equation}
\begin{equation}\label{eq:def ellj rep}
\ell_j = \langle m_j \varpi_{i_j} + \cdots + m_n \varpi_{i_n}, \alpha_{i_j}^{\vee} \rangle \quad
\text{ for }1 \leq j \leq n.
\end{equation}
Note that the constants $c_{jk}$ are Cartan integers of $G$.  
The following example illustrates these definitions.

\begin{example}\label{example_TC_integers_121}
  Consider $G = \SL(3,\C)$ with simple roots $\{\alpha_1,
  \alpha_2\}$. Let $\mathbf i = (1,2,1)$ and $\mfm = (1,1,1)$. Then we have 
  \[
    c_{12} = \langle \alpha_2, \alpha_1^{\vee} \rangle = -1, \quad
    c_{13} = \langle \alpha_1, \alpha_1^{\vee} \rangle = 2, \quad
    c_{23}  = \langle \alpha_1, \alpha_2^{\vee} \rangle = -1, 
  \]
  and 
  \[
  \begin{split}
  \ell_1 &= \langle \varpi_1 + \varpi_2 + \varpi_1, \alpha_1^{\vee} \rangle = 2, \\
  \ell_2 &= \langle \varpi_2 + \varpi_1, \alpha_2^{\vee} \rangle = 1,\\
  \ell_3 &= \langle \varpi_1 + \alpha_1^{\vee} \rangle = 1. 
  \end{split}
  \]
\end{example}

\begin{example}
	Consider $G = \SO(8)$ with simple roots $\{\alpha_1,\dots,\alpha_4\}$. See Table~\ref{Dynkin} for the numbering on simple roots. Let $\mathbf i = (1,2,3,2,4)$ and $\mfm = (2,1,3,1,1)$. Then  the integers $c_{jk}$ and $\mfl$ are given by
	\[
	(c_{jk})
	= \begin{pmatrix}
	0 & -1 & 0 & -1 & 0 \\
	0 & 0 & -1 & 2 & -1 \\
	0 & 0 & 0 & -1 & 0 \\
	0 & 0 & 0 & 0 & -1 \\
	0 & 0 & 0 & 0 & 0
	\end{pmatrix},
	\quad 	\mfl = (2, 2, 3, 1, 1).
	\]
\end{example}

Geometrically, a word $\mathbf i = (i_1,\dots,i_n)$ and the integer vector $\mfm$ defines a \defi{Bott--Samelson variety} $Z_{\mathbf i}$ and a line bundle $\mathcal L_{\mathbf i, \mfm}$ on it. More precisely, the Bott--Samelson variety $Z_{\mathbf i}$ is defined to be the quotient
\[
Z_{\mathbf i} = (P_{i_1} \times \cdots \times P_{i_n})/B^n,
\]
where $P_i$ is the parabolic subgroup associated with the simple roots $\alpha_i$, i.e., its Lie algebra is $\mathfrak{g}_{\alpha_i} \oplus \mathfrak{b}$, and the right action of $B^n$ on $P_{i_1} \times \cdots \times P_{i_n}$ is given by
\[
(p_1,\dots,p_n) \cdot (b_1,\dots,b_n) = (p_1b_1,b_1^{-1}p_2b_2,\dots,b_{n-1}^{-1} p_nb_n).
\]
The multiplication map $(p_1,\dots,p_n) \mapsto p_1 \cdots p_n$ yields a well-defined morphism
\[
\mu \colon Z_{\mathbf i} \to G/B.
\]
The integer vector $\mfm = (m_1,\dots,m_n)$ defines the line bundle $\mathcal L_{\mathbf i, \mfm} \to Z_{\mathbf i}$ 
\[
\mathcal L_{\mathbf i, \mfm} = (P_{i_1} \times \cdots \times P_{i_n} \times \mathbb{C}) / B^n,
\]
where the right action of $B^n$ on $P_{i_1} \times \cdots \times P_{i_n} \times \mathbb{C}$ is defined by
\[
\begin{split}
&(p_1,\dots,p_n,v) \cdot (b_1,\dots,b_n) \\
&\qquad = (p_1b_1,b_1^{-1}p_2b_2,\dots,b_{n-1}^{-1}p_nb_n,
(m_1\varpi_{i_1})(b_1) \cdots (m_n \varpi_{i_n})(b_n)v).
\end{split}
\]
In this setting, the Bott--Samelson variety $Z_{\mathbf i}$ determined by the sequence $\mathbf i$ has a toric degeneration to the toric variety $X(\Sigma(\mfc))$ (see~\cite[\S 2]{Pasquier} and~\cite{Grossberg-Karshon}).
Moreover, the  line bundle $\mathcal L_{\mathbf i, \mfm}$ over the Bott--Samelson variety $Z_{\bf i}$  degenerates into the line bundle over the Bott manifold $X(\Sigma(\mfc))$. In particular, the degeneration of the line bundle $\mathcal L_{\mathbf i, \mfm}$  over $X(\Sigma(\mfc))$ is given by the divisor $D(\mfc, \mfl)$. 

The set $H^0(Z_{\mathbf i}, \mathcal{L}_{\mathbf i, \mfm})$ of holomorphic sections possesses a $B$-representation structure and is called a \defi{generalized Demazure module}. Moreover, the ordinary Demazure module can be obtained in this way.
Indeed, suppose that $\mathbf i = (i_1,\dots,i_n) \in [r]^n$ is a reduced decomposition of an element $w$ in the Weyl group $W$, i.e., $w = s_{i_1} \cdots s_{i_n}$ and $\ell(w) = n$. Then a dominant integral weight $\lambda = \sum_{i=1}^r \lambda_i \varpi_i$\footnote{A weight $\lambda = \sum_{i=1}^r \lambda_i \varpi_i$ is dominant integral  if $\lambda_i \in \Z_{\geq 0}$ for all $i$.} gives a line bundle~$\mathcal{L}_{\lambda} $ on the flag variety $G/B$, so that on the Schubert variety $X(w) \coloneqq \overline{BwB/B} \subset G/B$. Define $\mfm = (m_1,\dots,m_n) \in \Z_{\geq 0}^n$ by
\begin{equation}\label{eq_m_j_Demazure}
m_j = 
\begin{cases}
\lambda_{i_j} & \text{ if } i_{j'} \neq i_j \text{ for any } j < j' < n, \\
0 &\text{ otherwise.}
\end{cases}
\end{equation}
for $1 \leq j \leq n$. Then, we have that
\begin{equation}\label{eq_relation_L_lambda_im}
\mu^{\ast} \mathcal{L}_{\lambda} = \mathcal{L}_{\mathbf i, \mfm},
\end{equation}
and the morphism $\mu$ induces an isomorphism of $B$-modules
\begin{equation}\label{eq_Demazure_module_equivalence}
H^0(X(w), \mathcal{L}_{\lambda}) \simeq H^0(Z_{\mathbf i}, \mu^{\ast} \mathcal{L}_{\lambda})
\simeq \mathbb{C}_{-\lambda'} \otimes H^0(Z_{\mathbf i}, \mathcal{L}_{\mathbf i, \mfm})
\end{equation}
where $\lambda' = \sum_{i \in [n] \setminus \{i_1,\dots,i_n\}} \lambda_i \varpi_i$.
We note that the relation~\eqref{eq_relation_L_lambda_im} and the second isomorphism of $B$-modules in~\eqref{eq_Demazure_module_equivalence} holds even when $\mathbf i$ is not reduced.
See~\cite[Section~2]{Fujita} and reference therein for more details on generalized Demazure modules.

\begin{remark}\label{rmk_gDemazure_module}
	Since the Picard number of $Z_{\mathbf i}$ is $n$ and that of $G/B$ is $r$, not every line bundle over $Z_{\mathbf i}$ can be obtained from $\mathcal{L}_{\lambda}$ usually. 
	For example, let $G = \SL(3,\C)$ and $\mathbf i = (1,2,1)$. Consider a morphism 
	\(
	\mu \colon Z_{\mathbf i} \to G/B
	\)
	and a dominant weight $\lambda = \lambda_1 \varpi_1 + \lambda_2 \varpi_2$. Using~\eqref{eq_relation_L_lambda_im}, we obtain
	\[
	\mu^{\ast} \mathcal{L}_{\lambda} = \mathcal{L}_{(1,2,1), (0, \lambda_2, \lambda_1)}. 
	\]
	For this reason, any line bundle $\mathcal{L}_{(1,2,1),(m_1,m_2,m_3)}$ with $m_1\neq 0$ cannot be expressed as the form $\mu^{\ast} \mathcal{L}_{\lambda}$. 
	
\end{remark}

As a consequence of equations~\eqref{eq:def ellj rep} and~\eqref{eq_m_j_Demazure}, we obtain the following result which will be used later:
\begin{lemma}\label{lemma_ellj_and_lambda_ij}
	Let $\mathbf i = (i_1,\dots,i_n) \in [r]^n$ and $\lambda = \sum_{i=1}^r \lambda_i \varpi_i$ 
	a dominant weight. Suppose that $\mfm$ is given by~\eqref{eq_m_j_Demazure}.
	 Then the constant $\mfl(\mathbf i, \mfm) = (\ell_1,\dots,\ell_n)$ is given by the formula
	 \[
	 \ell_j = \lambda_{i_j} \quad \text{ for }1 \leq j \leq n. 
	 \]
\end{lemma}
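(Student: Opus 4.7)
The plan is to substitute the definition of $\mfm$ from \eqref{eq_m_j_Demazure} directly into the formula \eqref{eq:def ellj rep} defining $\ell_j$, and then exploit the duality $\langle \varpi_i, \alpha_k^\vee\rangle = \delta_{ik}$ between fundamental weights and simple coroots to collapse the inner product to a single surviving term.

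First I would use linearity of the pairing $\langle\,\cdot\,,\alpha_{i_j}^\vee\rangle$ and the defining relation of the fundamental weights to rewrite
\[
\ell_j \;=\; \sum_{k=j}^{n} m_k\,\langle \varpi_{i_k}, \alpha_{i_j}^\vee\rangle \;=\; \sum_{\substack{j \leq k \leq n \\ i_k = i_j}} m_k,
\]
so only those indices $k \geq j$ at which the letter $i_j$ reappears in the word $\mathbf{i}$ can contribute to $\ell_j$.

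Next I would read \eqref{eq_m_j_Demazure} as the statement that $m_k = \lambda_{i_k}$ precisely when $k$ is the \emph{last} occurrence of the letter $i_k$ in $\mathbf{i}$, and $m_k = 0$ otherwise. Among the (nonempty) set of indices $k \geq j$ with $i_k = i_j$, exactly one, call it $k_0$, is the last occurrence of $i_j$ in $\mathbf{i}$. For that index $m_{k_0} = \lambda_{i_{k_0}} = \lambda_{i_j}$, while every other term in the collapsed sum vanishes. Substituting into the previous display would give $\ell_j = \lambda_{i_j}$, as claimed.

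The argument is essentially bookkeeping on positions in the word $\mathbf{i}$, and the only mild subtlety I anticipate is reading \eqref{eq_m_j_Demazure} correctly so that the decisive condition is "position $j$ is the last occurrence of $i_j$ in $\mathbf{i}$"; once that is done, the orthogonality $\langle\varpi_i, \alpha_k^\vee\rangle = \delta_{ik}$ does all of the work, and there is no real obstacle to the proof.
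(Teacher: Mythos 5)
Your proposal is correct and matches the paper's proof step for step: substitute $\mfm$ into \eqref{eq:def ellj rep}, use $\langle\varpi_i,\alpha_k^\vee\rangle=\delta_{ik}$ to reduce $\ell_j$ to $\sum_{j'\geq j,\, i_{j'}=i_j} m_{j'}$, and then observe via \eqref{eq_m_j_Demazure} that the only nonzero contribution comes from the last occurrence of the letter $i_j$, giving $\lambda_{i_j}$. No gap; the bookkeeping on ``last occurrence'' is exactly the reading the paper intends.
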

\begin{proof}
	From~\eqref{eq:def ellj rep} and~\eqref{eq_m_j_Demazure}, we get
	\[
	\begin{split}
	\ell_j & = \langle m_j \varpi_{i_j} + \cdots + m_n \varpi_{i_n}, \alpha_{i_j}^{\vee} \rangle 
	\quad (\because~\eqref{eq:def ellj rep}) \\
	&= \sum_{\substack{j' \geq j, \\ i_j = i_{j'}}} m_{j'} \\
	&= \lambda_{i_j} \quad (\because~\eqref{eq_m_j_Demazure}).
	\end{split}
	\]
	This proves the lemma.
\end{proof}

As mentioned in the introduction,  Grossberg and Karshon derive a Demazure-type character formula for the  $B$-representation $H^0(Z_{\mathbf i}, \mathcal{L}_{\mathbf i, \mfm})$ corresponding to $\mathbf i$ and $\mfm$, expressed as a sum over the lattice points $\Z^n \cap C(\mfc, \mfl)$ in the Grossberg--Karshon twisted cube $(C(\mfc, \mfl),\rho)$ (see \cite[Theorems 5 and 6]{Grossberg-Karshon}). The lattice points appear with a plus or minus sign according the density function~$\rho$. Accordingly, their formula 
is a \emph{positive} formula if $\rho$ is constant and equal to $1$
 on all
of $C(\mfc,\mfl)$. 
From the point of view of representation theory it
is therefore of interest to determine conditions on the integer vector $\mfl = (\ell_1,\dots,\ell_n)$ and 
the word decomposition $\mathbf i = (i_1,\dots,i_n)$ such that the associated
Grossberg--Karshon twisted cube is in fact untwisted.

\section{Diagram jumping walks, hesitant jumping walk avoidance, and \\
	the statement of the main theorem}\label{sec:walks}

In order to state our main theorem, it is useful to introduce some
terminology. 
From now on, we assume that the group $G$ is simply-laced, i.e., the Dynkin diagram of $G$ only contains simple links, so that $G$ is of type $A$, $D$, or $E$. 
In what follows, we fix an ordering on the simple roots as
in Table~\ref{Dynkin}; our conventions agree with that in the
standard textbook of Humphreys~\cite{Humphreys}. 
\begin{center}
	\begin{table}[b]
  \begin{tabular}{c|c  }
  \toprule
  $\Phi$ & Dynkin diagram \\
  \midrule
  \raisebox{1em}{$A_r$ $(r \geq 1)$} &
     \begin{tikzpicture}[scale=.5]
       \foreach \x in {0,...,4}
       \draw[xshift=\x cm] (\x cm,0) circle (.1cm);
       \draw[dotted] (4.1 cm,0) -- +(1.8 cm,0);
       \foreach \y in {0.05,1.05,3.05,...,4.05}
       \draw[xshift=\y cm] (\y cm,0) -- +(1.8 cm,0);
       \node [below] at (0,-.2) {\small{$1$}};
       \node [below] at (2,-.2) {\small{$2$}};
       \node [below] at (4,-.2) {\small{$3$}};
       \node [below] at (6,-.2) {\small{$r-1$}};
       \node [below] at (8,-.2) {\small{$r$}};
     \end{tikzpicture}  \\[0.5em]

  \raisebox{1.5em}{$D_r$ $(r \geq 4)$} & 
   \begin{tikzpicture}[scale=.5]
\foreach \x in {0,...,3}
\draw[xshift=\x cm,] (\x cm,0) circle (.1cm);
\draw[xshift=6 cm] (30: 17 mm) circle (.1cm);
\draw[xshift=6 cm] (-30: 17 mm) circle (.1cm);
\draw[dotted] (2.1 cm,0) -- +(1.8 cm,0);
\foreach \y in {0.05, 2.05,...,3.05}
\draw[xshift=\y cm] (\y cm,0) -- +(1.8 cm,0);
\draw[xshift=6 cm] (30: 1 mm) -- (30: 16 mm);
\draw[xshift=6 cm] (-30: 1 mm) -- (-30: 16 mm);
\node [below] at (0,-.2) {\small{$1$}};
\node [below] at (2,-.2) {\small{$2$}};
\node [below] at (4,-.2) {\small{$r-3$}};
\node [below] at (6,-.2) {\small{$r-2$}};
\node [right] at (7.8,.7) {\small{$r-1$}};
\node [right] at (7.6,-1) {\small{$r$}};
\end{tikzpicture}
  \\[0.5em]
  \raisebox{1.5em}{$E_6$} & 
 \begin{tikzpicture}[scale=.5]
\foreach \x in {0,...,4}
\draw[,xshift=\x cm] (\x cm,0) circle (1 mm);
\foreach \y in {0,...,3}
\draw[,xshift=\y cm] (\y cm,0) ++(.1 cm, 0) -- +(18 mm,0);
\draw[] (4 cm,2 cm) circle (1 mm);
\draw[] (4 cm, 1mm) -- +(0, 1.8 cm);
\node [below] at (0,-.2) {\small{$1$}};
\node [below] at (2,-.2) {\small{$3$}};
\node [below] at (4,-.2) {\small{$4$}};    
\node [below] at (6,-.2) {\small{$5$}};
\node [below] at (8,-.2) {\small{$6$}};    
\node [right] at (4,2) {\small{$2$}};
\end{tikzpicture}

  \\
  \raisebox{1.5em}{$E_7$} & 
   \begin{tikzpicture}[scale=.5]
\foreach \x in {0,...,5}
\draw[,xshift=\x cm] (\x cm,0) circle (1 mm);
\foreach \y in {0,...,4}
\draw[,xshift=\y cm] (\y cm,0) ++(.1 cm, 0) -- +(18 mm,0);
\draw[] (4 cm,2 cm) circle (1 mm);
\draw[] (4 cm, 1mm) -- +(0, 1.8 cm);
\node [below] at (0,-.2) {\small{$1$}};
\node [below] at (2,-.2) {\small{$3$}};
\node [below] at (4,-.2) {\small{$4$}};    
\node [below] at (6,-.2) {\small{$5$}};
\node [below] at (8,-.2) {\small{$6$}};    
\node [below] at (10,-.2) {\small{$7$}};    
\node [right] at (4,2) {\small{$2$}};
\end{tikzpicture}

  \\
  \raisebox{1.5em}{$E_8$} & 
  \begin{tikzpicture}[scale=.5]
\foreach \x in {0,...,6}
\draw[,xshift=\x cm] (\x cm,0) circle (1 mm);
\foreach \y in {0,...,5}
\draw[,xshift=\y cm] (\y cm,0) ++(.1 cm, 0) -- +(18 mm,0);
\draw[] (4 cm,2 cm) circle (1 mm);
\draw[] (4 cm, 1mm) -- +(0, 1.8 cm);
\node [below] at (0,-.2) {\small{$1$}};
\node [below] at (2,-.2) {\small{$3$}};
\node [below] at (4,-.2) {\small{$4$}};    
\node [below] at (6,-.2) {\small{$5$}};
\node [below] at (8,-.2) {\small{$6$}};    
\node [below] at (10,-.2) {\small{$7$}};    
\node [below] at (12,-.2) {\small{$8$}};    
\node [right] at (4,2) {\small{$2$}};
\end{tikzpicture}
  \\
  \bottomrule
\end{tabular}
\captionof{table}{Dynkin diagrams.}\label{Dynkin}
\end{table}
\end{center}

In order to simplify the notation, we define \textit{distance} $d(A,B)$ of two subsets $A, B \subset [r]$ to be 
\begin{equation}\label{eq_def_distance}
d(A,B) \coloneqq \min \{ d(a,b)\mid a \in A, b \in B\}.
\end{equation}
Here, $d(a,b)$ for $a,b \in [r]$ is the minimal distance of elements $a,b$ in the corresponding Dynkin diagram.
For example, suppose that $G$ is of type $A_5$. Then	we have the following enumerations.
	\[
	d(\{1,2,3\}, \{4,5\}) = 1, \quad d(\{1,2\}, \{2\}) = 0.
	\] 
Moreover, when $G$ is of type $D_4$, then $d(\{1,2\},\{4\}) = 1$ and $d(\{3\},\{4\})=2$.
\begin{definition}
  Let $ \mathbf i = (i_1, i_2, \ldots, i_n) \in [r]^n$.
  We say that $\mathbf i$ is a \textit{jumping walk}
  if 
  \[
  d(i_{j}, \{i_1,\dots,i_{j-1}\}) = 1
  \]
  for all $1 \leq j \leq n$.
\end{definition}

\begin{example}\label{example:diagram walks}
	\begin{enumerate}
		\item In Type $A$, the words $(1,2,3,4)$, $(3,2,1,4,5)$, and $(3,2,4,1,5)$ are all jumping walks. See Figures~\ref{fig_jumping_walk_1}, \ref{fig_jumping_walk_2}, \ref{fig_jumping_walk_3}.
		\item In Type $D$, $(r-1,r-2,r-3,r,r-4)$ is a jumping walk. See Figure~\ref{fig_jumping_walk_4} for $r = 5$.
		\item In Type $E_8$, $(4,2,3,5,1,6,7,8)$ is a jumping walk. See Figure~\ref{fig_jumping_walk_5} 
		\end{enumerate}
\end{example}
\begin{figure}
\begin{center}
\begin{subfigure}[b]{0.25\textwidth}
	\centering
	     \begin{tikzpicture}[scale=.43]
	\foreach \x in {0,...,3}
	\draw[xshift=\x cm] (\x cm,0) circle (.1cm) node (\x) {};
	\foreach \y in {0.05,1.05,2.05}
	\draw[xshift=\y cm] (\y cm,0) -- +(1.8 cm,0);
	\node [below] at (0,-.2) {\small{$1$}};
	\node [below] at (2,-.2) {\small{$2$}};
	\node [below] at (4,-.2) {\small{$3$}};
	\node [below] at (6,-.2) {\small{$4$}};

	\path[->, red, thick] (0) edge [bend left] (1) {}
	 (1) edge [bend left] (2) {}
	  (2) edge [bend left] (3) {};
	
	\end{tikzpicture}
	\caption{$(1,2,3,4)$.}\label{fig_jumping_walk_1}
\end{subfigure}	
\begin{subfigure}[b]{0.3\textwidth}
	\centering
	\begin{tikzpicture}[scale=.43]
	\foreach \x in {0,...,4}
	\draw[xshift=\x cm] (\x cm,0) circle (.1cm) node (\x) {};
	\foreach \y in {0.05,...,3.05}
	\draw[xshift=\y cm] (\y cm,0) -- +(1.8 cm,0);
	\node [below] at (0,-.2) {\small{$1$}};
	\node [below] at (2,-.2) {\small{$2$}};
	\node [below] at (4,-.2) {\small{$3$}};
	\node [below] at (6,-.2) {\small{$4$}};
	\node [below] at (8,-.2) {\small{$5$}};
	
	\path[->, red, thick] (2) edge [bend right] (1) {}
	(1) edge [bend right] (0) {}
	(0) edge [bend left] (3) {}
	(3) edge [bend left] (4) {};
	
	\end{tikzpicture}
	\caption{$(3,2,1,4,5)$.}\label{fig_jumping_walk_2}
\end{subfigure}	
\begin{subfigure}[b]{0.3\textwidth}
	\centering
	\begin{tikzpicture}[scale=.43]
	\foreach \x in {0,...,4}
	\draw[xshift=\x cm] (\x cm,0) circle (.1cm) node (\x) {};
	\foreach \y in {0.05,...,3.05}
	\draw[xshift=\y cm] (\y cm,0) -- +(1.8 cm,0);
	\node [below] at (0,-.2) {\small{$1$}};
	\node [below] at (2,-.2) {\small{$2$}};
	\node [below] at (4,-.2) {\small{$3$}};
	\node [below] at (6,-.2) {\small{$4$}};
	\node [below] at (8,-.2) {\small{$5$}};
	
	\path[->, red, thick] (2) edge [bend right] (1) {}
	(1) edge [bend left] (3) {}
	(3) edge [bend right] (0) {}
	(0) edge [bend left] (4) {};
	
	\end{tikzpicture}
	\caption{$(3,2,4,1,5)$.}\label{fig_jumping_walk_3}
	\end{subfigure}

\hspace{1em}

\begin{subfigure}[b]{0.3\textwidth}
   \begin{tikzpicture}[scale=.43]
\foreach \x in {1,...,3}
\draw[xshift=\x cm,] (\x cm,0) circle (.1cm) node (\x) {};
\draw[xshift=6 cm] (30: 17 mm) circle (.1cm) node (4) {};
\draw[xshift=6 cm] (-30: 17 mm) circle (.1cm) node (5) {};
\foreach \y in {1.05, 2.05}
\draw[xshift=\y cm] (\y cm,0) -- +(1.8 cm,0);
\draw[xshift=6 cm] (30: 1 mm) -- (30: 16 mm);
\draw[xshift=6 cm] (-30: 1 mm) -- (-30: 16 mm);
\node [below] at (2,-.2) {\small{$1$}};
\node [below] at (4,-.2) {\small{$2$}};
\node [below] at (6,-.2) {\small{$3$}};
\node [right] at (7.8,.7) {\small{$4$}};
\node [right] at (7.6,-1) {\small{$5$}};

	\path[->, red, thick] (4) edge [bend right] (3) {}
(3) edge [bend left] (2) {}
(2) edge [bend right] (5) {}
(5) edge [bend left] (1) {};
\end{tikzpicture}
\caption{$(4,3,2,5,1)$.}\label{fig_jumping_walk_4}
\end{subfigure}
\begin{subfigure}[b]{0.3\textwidth}
  \begin{tikzpicture}[scale=.43]
\foreach \x in {0,...,6}
\draw[,xshift=\x cm] (\x cm,0) circle (1 mm) node (\x) {};
\foreach \y in {0,...,5}
\draw[,xshift=\y cm] (\y cm,0) ++(.1 cm, 0) -- +(18 mm,0);
\draw[] (4 cm,2 cm) circle (1 mm) node (2') {} ;
\draw[] (4 cm, 1mm) -- +(0, 1.8 cm);
\node [below] at (0,-.2) {\small{$1$}};
\node [below] at (2,-.2) {\small{$3$}};
\node [below] at (4,-.2) {\small{$4$}};    
\node [below] at (6,-.2) {\small{$5$}};
\node [below] at (8,-.2) {\small{$6$}};    
\node [below] at (10,-.2) {\small{$7$}};    
\node [below] at (12,-.2) {\small{$8$}};    
\node [right] at (4,2) {\small{$2$}};

	\path[->, red, thick] (2) edge [bend right] (2') {}
(2') edge [bend right] (1) {}
(1) edge [bend right] (3) {}
(3) edge [bend left] (0) {}
(0) edge [bend right] (4) {}
(4) edge [bend right] (5) {}
(5) edge [bend right] (6) {};

\end{tikzpicture}
\caption{$(4,2,3,5,1,6,7,8)$.}\label{fig_jumping_walk_5}
\end{subfigure}
\end{center}
	\caption{Jumping walks.}\label{fig_example_jumping_walks}
\end{figure}
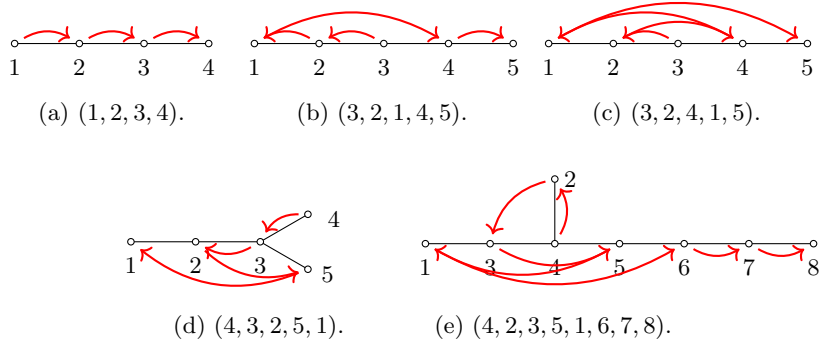

Because of the definition, a jumping walk is minimal. More precisely, the indices $\{i_1,\dots,i_n\}$ are all distinct, i.e., the jumping walk visits any given vertex of the Dynkin diagram at most once.

In what follows, we also find it useful to consider words which are
`almost' jumping walks, except that the word begins with a repetition
(thus disqualifying it from being a walk), i.e. the initial index
appears twice. 

\begin{definition}
  Let $\mathbf{i}= (i_0,i_1,  \ldots, i_n)\in [r]^{n+1}$.  We say that
  $\mathbf{i}$ is a \textit{hesitant jumping walk} if 
  \begin{itemize}
  \item $n \geq 1$, 
  \item $i_0 = i_1$, and 
  \item the subword $(i_1, \ldots, i_n)$ is a jumping walk. 
  \end{itemize}
  In other words, except for the `hesitation' at the first step, the
  remainder of the word is a jumping walk. We refer to the subword $(i_1, \ldots,
  i_n)$ as the \textit{jumping component} of the hesitant jumping walk. 
\end{definition}

\begin{definition}
	Let $\ell = (\ell_0, \ell_1,\dots,\ell_n) \in \Z_{\geq 0}^{n+1}$ and $\mathbf{i} \in [r]^{n+1}$. We say that $\mathbf i$ is a \textit{hesitant jumping $\mfl$-walk} if
	\begin{itemize}
		\item $\mathbf i$ is a hesitant jumping walk, and
		\item  $\ell_{0} - \ell_{1} < \ell_{1} + \ell_{2} + \cdots + \ell_{n}$.
	\end{itemize} 
A word $\mathbf i$ is \textit{hesitant-jumping-$\mfl$-walk-avoiding} if there is no subword $\mathbf{j} = (i_{j_0},i_{j_1},\dots,i_{j_s})$ of $\mathbf{i}$ which is a hesitant jumping $(\ell_{j_0},\ell_{j_1},\dots,\ell_{j_s})$-walk. 
\end{definition}

\begin{example}\label{example_121_hesitant_jumping_subword}
Let $G = \SL(3,\C)$. Suppose that $\mathbf i = (1,2,1)$ and $(\ell_1,\ell_2,\ell_3) \in \Z_{\geq 0}^3$. Then the indices $(j_0,j_1) = (1,3)$ defines a hesitant jumping walk 
\[
(i_{j_0},i_{j_1}) = (i_1,i_3) = (1,1).
\] 
Consequently, the word $\mathbf i$ is hesitant-jumping-$\mfl$-walk-avoiding if and only if the integers $\ell_1,\ell_2,\ell_3$ satisfies $\ell_1 - \ell_3 \geq \ell_3$. 
\end{example}

\begin{example}
	Let $G = \SL(4,\C)$. Suppose that $\mathbf i = (1,2,1,3,2,1)$ and $(\ell_1,\dots,\ell_6) \in \Z_{\geq 0}^6$. Then there are five hesitant jumping subwords of $\mathbf i$.
	\[
	\begin{array}{l|cccccc}
	(j_0,j_1,\dots,j_s) & 1 & 2 & 1 & 3 & 2 & 1 \\
	\hline
 	(1,3) & 1 & & 1 \\
 	(1,3,5) & 1 & & 1 & & 2 \\
 	(2,5) & & 2 & & & 2 \\
 	(2,5,6) & & 2 & & & 2 & 1 \\
 	(3,6) & & & 1 &  & & 1
	\end{array}
	\]
	Consequently, the word $\mathbf i$ is hesitant-jumping-$\mfl$-walk-avoiding if and only if the integers $\ell_1,\dots,\ell_6$ satisfies
	\[
	\begin{split}
	\ell_1 -  \ell_3 &\geq \ell_3, \quad
	\ell_1 - \ell_3 \geq \ell_3 + \ell_5, \\
	\ell_2 - \ell_5 &\geq \ell_5,  \quad
	\ell_2 - \ell_5 \geq \ell_5 + \ell_6, \quad
	\ell_3 - \ell_6 \geq \ell_6.
	\end{split}
	\]
\end{example}
Given the terminology introduced above we  state our main theorem.
\begin{theorem}\label{theorem:main}
	Let $G$ be a complex simply-connected semisimple algebraic group of rank $r$.
	Let $\mathbf i = (i_1, i_2, \ldots, i_n) \in [r]^n$ be a word  and let 
	$\mfl = (\ell_1, \ldots, \ell_n) \in \Z_{\geq 0}^n$.
	Let $\mfc = \{c_{jk}\}$  be determined from $\mathbf i$ as in~\eqref{eq:def cjk rep}.
	Then the corresponding Grossberg--Karshon twisted cube $(C(\mfc, \mfl),
	\rho)$ is untwisted if and only if $\mathbf i$ is hesitant-jumping-$\mfl$-walk-avoiding.
\end{theorem}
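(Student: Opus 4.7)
The plan is to apply Theorem~\ref{thm-HY}, which reformulates untwistedness as the requirement that $m_{\sigma,j}\geq 0$ for every $\sigma\in\{+,-\}^n$ and every $1\leq j\leq n$. Since $m_{\sigma,j}=0$ whenever $\sigma_j=+$, and since the recursion defining $m_{\sigma,j}$ for $\sigma_j=-$ only involves indices $k>j$ with $\sigma_k=-$, one reduces to a statement about sub-sequences: writing $S=\{k:\sigma_k=-\}=\{k_1<\cdots<k_t\}$ and $C_{ab}=c_{k_a,k_b}$, the values $m_{\sigma,k_a}$ coincide with the all-minus recursion
\[
M_t=\ell_{k_t},\qquad M_a=\ell_{k_a}-\sum_{b>a}C_{ab}M_b.
\]
Since hesitant-jumping-$\mfl$-walk-avoidance is also inherited by sub-sequences, it suffices to show that avoidance is equivalent to the nonnegativity of every $M_a$ above for every sub-sequence of $\mathbf i$.

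For the only-if direction, I assume $(i_{j_0},i_{j_1},\ldots,i_{j_s})$ is a hesitant jumping $\mfl$-walk inside $\mathbf i$ and compute $M_0$ for the corresponding sub-sequence. Because $G$ is simply-laced its Dynkin diagram is a tree, so in the jumping walk $(i_{j_1},\ldots,i_{j_s})$ each $i_{j_u}$ with $u\geq 2$ is adjacent to a \emph{unique} previously-visited vertex, endowing the walk with a rooted tree structure $T$ whose root is $j_1$. Letting $T_a$ denote the descendants of $a$ in $T$, a backward induction on $a$ shows
\[
M_a=\sum_{b\in T_a}\ell_{j_b}\qquad\text{for all }1\leq a\leq s,
\]
because $c_{j_a,j_b}=-1$ precisely when $b$ is a child of $a$ in $T$ (and is $0$ otherwise). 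At $a=0$, one uses $c_{j_0,j_1}=2$ (from $i_{j_0}=i_{j_1}$) together with $c_{j_0,j_u}=-1$ exactly when $u$ is a child of $1$ in $T$; after collecting terms these collapse to
\[
M_0=\ell_{j_0}-2\ell_{j_1}-\ell_{j_2}-\cdots-\ell_{j_s}.
\]
The hesitant jumping $\mfl$-walk condition $\ell_{j_0}-\ell_{j_1}<\ell_{j_1}+\cdots+\ell_{j_s}$ is precisely $M_0<0$, contradicting untwistedness.

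For the if direction, I would induct on the length $n$. The base case $n=1$ is trivial. For the inductive step, truncating to $\mathbf i'=(i_2,\ldots,i_n)$ inherits avoidance, so the inductive hypothesis gives $m_{\sigma,j}\geq 0$ for all $j\geq 2$, and only $m_{\sigma,1}\geq 0$ remains (and only when $\sigma_1=-$). If $i_1$ does not recur at the minus-positions, every adjacency term in the recursion contributes nonnegatively and immediately $m_{\sigma,1}\geq\ell_1\geq 0$. If $i_1$ recurs at minus-positions $b_1<\cdots<b_r$, my plan is to upper bound each $m_{\sigma,b_t}$ by a quantity of the form $\ell_{b_t}+(\text{sum of $\ell$'s along a jumping walk extending from }b_t)$, obtained by discarding the nonpositive $-2$-contributions in the unfolded recursion, and to pair each such bound against the avoidance inequality $\ell_1\geq 2\ell_{b_t}+(\text{same sum})$ furnished by the hesitant jumping $\mfl$-walk $(i_1,i_{b_t},\ldots)$ built from that very extension. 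Summing over $t$ then should yield $\ell_1+\sum_{k:\,i_k\text{ adj }i_1}m_{\sigma,k}\geq 2\sum_t m_{\sigma,b_t}$, equivalently $m_{\sigma,1}\geq 0$.

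The principal obstacle is the pairing in this last step: for each repeat $b_t$ one must select a specific jumping walk extending from it so that (i) the resulting avoidance inequality dominates the upper bound on $m_{\sigma,b_t}$, and (ii) no single avoidance inequality is invoked for more than one $t$. This will likely require strengthening the inductive hypothesis to propagate not merely $m_{\sigma,j}\geq 0$ but the finer walk-shaped upper bounds used above; once this refinement is in place, the uniqueness-of-parent property of the simply-laced Dynkin tree, already used in the only-if direction, should make the matching canonical and complete the argument.
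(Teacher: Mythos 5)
Your necessity argument is correct and recasts the paper's computation more structurally. Because the Dynkin diagram is a tree and the Cartan integers lie in $\{2,-1,0\}$, the jumping component carries the rooted-tree structure $T$ you describe, and backward induction gives $M_a=\sum_{b\in T_a}\ell_{j_b}$ in one stroke; this matches the paper's identity $m_{\sigma,j_1}=\ell_{j_1}+\cdots+\ell_{j_s}$ (obtained there by telescoping the Cartan sums via Proposition~\ref{lemma_connected_hesitant} and equation~\eqref{eq_m_j1_equality}), and hence also your $M_0=\ell_{j_0}-2\ell_{j_1}-\ell_{j_2}-\cdots-\ell_{j_s}<0$. This is the same computation carried out with a cleaner invariant, and it is a perfectly good substitute for the paper's Section~\ref{sec:necessity}.

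The sufficiency plan, however, has a gap that the final ``pairing'' you flag cannot close in the form you propose. The target is $m_{\sigma,1}=\ell_1-2\sum_t m_{\sigma,b_t}+\sum_{k:\,i_k\text{ adj }i_1}m_{\sigma,k}\ge 0$, but every avoidance inequality $\ell_1-\ell_{b_t}\ge\ell_{b_t}+S_t$ has $\ell_1$ on its left; summing over $t$ overcounts $\ell_1$ by a factor of $r$, and requiring the walks to be pairwise distinct does not repair that. There is a second problem: when $i_{b_t}$ is a branch vertex, the positive part of the unfolded recursion for $m_{\sigma,b_t}$ ranges over an entire subtree, so the ``walk sum'' $S_t$ must account for all of it, while the avoidance hypothesis gives you one inequality per walk, not one per subtree. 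The paper's sufficiency proof avoids an induction on $n$ altogether: it fixes $\sigma$ together with the \emph{maximal} index $k$ for which $m_{\sigma,k}<0$ (so $m_{\sigma,s}\ge 0$ for all $s>k$), unfolds the recursion for $m_{\sigma,k}$ one index at a time by always choosing the minimal later index with a strictly positive contribution, and uses Lemma~\ref{lemma_sum_c_negative} to show that the accumulated Cartan coefficient stays equal to $-1$ so the peeled indices form a jumping walk; the estimates~\eqref{eq_proof_3} and~\eqref{eq_proof_4} then terminate in a hesitant jumping $\mfl$-walk because the accumulated $\ell$-sum is bounded. That direct peeling argument is genuinely different from your proposed induction, and as written your inductive step cannot be completed without a new idea along those lines.
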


\begin{example}
	Let $G = \SL(3,\C)$ and $\mathbf i = (1,2,1)$. Then by Example~\ref{example_TC_integers_121}, the subset $C(\mathbf c, \ell) \subset \R^3$ consists of points $(x_1,x_2,x_3)$ satisfying:
	\[
\begin{split}
\ell_1 + x_2 - 2 x_3 < x_1 < 0 \quad &\text{ or } \quad 0 \leq x_1 \leq \ell_1 + x_2 - 2 x_3, \\
\ell_2 + x_3 < x_2 < 0\quad &\text{ or } \quad 0 \leq x_2 \leq \ell_2 + x_3, \\
\ell_3 < x_3 < 0 \quad & \text{ or }\quad 0 \leq x_3 \leq \ell_3.
\end{split}
	\]
	By Example~\ref{example_121_hesitant_jumping_subword}, the word $\mathbf i$ is hesitant-jumping-$\mfl$-walk-avoiding if and only if 
	\[
	\ell_1 - \ell_3 \geq \ell_3.
	\]
	In Figure~\ref{fig_TC_121_example}, we draw the twisted cubes for $(\ell_1,\ell_2,\ell_3) = (3,1,1)$ and $(2,1,2)$, and the former one gives untwisted twisted cube but not the latter. We present the lattice points in twisted cubes with plus sign in red color and with minus sign in blue color.
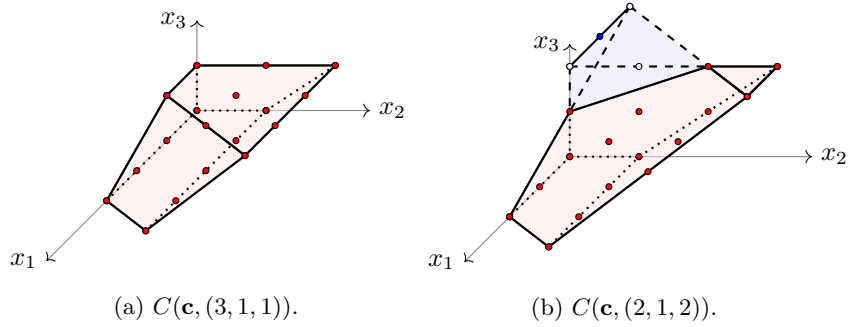
\begin{figure}
	\begin{subfigure}{0.43\textwidth}
		\centering
		\begin{tikzpicture}[x=2.3cm, y=1.5cm, z=-1cm, scale=.4]
		
		\coordinate (1) at (2,1,3);
		\coordinate (2) at (2,1,0);
		\coordinate (3) at (0,1,1);
		\coordinate (4) at (0,1,0);
		\coordinate (5) at (1,0,4);
	 	\coordinate (6) at (1,0,0);
	 	\coordinate (7) at (0,0,3);
	 	\coordinate (8) at (0,0,0);
\draw [->, draw=gray] (0,0,0) -- (2.5,0,0) node [right] {$x_2$};
\draw [->, draw=gray] (0,0,0) -- (0,2,0) node [left] {$x_3$};
\draw [->, draw=gray] (0,0,0) -- (0,0,5) node [left] {$x_1$};

	\coordinate (x1) at (0,0,0);
	\coordinate (x2) at (1,0,0);
	\coordinate (x3) at (0,1,0);
	\coordinate (x4) at (0,1,1);
	
	\coordinate (x6) at (1,0,1);
	\coordinate (x7) at (1,0,2);
	\coordinate (x8) at (1,0,3);
	\coordinate (x9) at (1,0,4);
	
	\coordinate (x10) at (0,0,1);
	\coordinate (x11) at (0,0,2);
	\coordinate (x12) at (0,0,3);
	
	\coordinate (x13) at (1,1,0);
	\coordinate (x14) at (2,1,0);
	
	\coordinate (x15) at (2,1,1);
	\coordinate (x16) at (2,1,2);
	\coordinate (x17) at (2,1,3);
	\coordinate (x18) at (1,1,1);
	\coordinate (x5) at (1,1,2);


	 	\draw[thick,fill=red!10!white,semitransparent] (2)--(4)--(3)--(1)--cycle;
	 	\draw[thick] (2)--(4)--(3)--(1)--cycle;
	 	\draw[thick,fill=red!10!white,semitransparent] (1)--(5)--(7)--(3);
	 	\draw[thick ] (1)--(5)--(7)--(3);
	 	\draw[thick, dotted](2)--(6)--(5);
	 	\draw[thick, dotted] (7)--(8)--(4);
	 	\draw[thick, dotted] (8)--(6);
	 	\draw[thick] (1)--(3);

	 	\foreach \x in {1,...,8}	\draw[fill=black] (\x) circle (.1cm);
	 	
	\foreach \y in {1,...,18} \draw[fill=red] (x\y) circle (.1cm);

\end{tikzpicture}
\caption{ $C(\mathbf c, (3,1,1))$.}
\end{subfigure}
	\begin{subfigure}{0.43\textwidth}
		\centering
	\begin{tikzpicture}[x=2.3cm, y=1.5cm, z=-1cm, scale=.4]
	
	\coordinate (1) at (3,2,1);
	\coordinate (2) at (3,2,0);
	\coordinate (3) at (0,2,-2);
	\coordinate (4) at (0,2,0);
	\coordinate (5) at (1,0,3);
	\coordinate (6) at (1,0,0);
	\coordinate (7) at (0,0,2);
	\coordinate (8) at (0,0,0);
	
	\coordinate (m1) at (2,2,0);
	\coordinate (m2) at (0,1,0);

	\draw [->, draw=gray] (0,0,0) -- (3.5,0,0) node [right] {$x_2$};
\draw [->, draw=gray] (0,0,0) -- (0,2.5,0) node [left] {$x_3$};
\draw [->, draw=gray] (0,0,0) -- (0,0,3.5) node [left] {$x_1$};

\coordinate (x1) at (0,0,0);
\coordinate (x2) at (0,0,1);

\coordinate (x4) at (1,0,0);
\coordinate (x5) at (1,0,1);
\coordinate (x3) at (1,0,2);
\coordinate (x7) at (3,2,1);
\coordinate (x8) at (3,2,0);
\coordinate (x9) at (0,0,2);
\coordinate (x10) at (1,0,3);

\coordinate (x11) at (1,1,0);
\coordinate (x6) at (1,1,1);

\coordinate (x12) at (2,1,2);
\coordinate (x13) at (2,1,1);
\coordinate (x14) at (2,1,0);

\coordinate (y1) at (0,2,-1);

	\draw[fill=blue!10!white,draw=none,semitransparent] (3)--(4)--(m1)--cycle;
	\draw[fill=blue!10!white, draw=none,semitransparent] (4)--(m2)--(m1)--cycle;
	
	\draw[thick, fill=red!10!white, semitransparent]  (m1)--(m2)--(7)--(5)--(1)--cycle;
	\draw[thick, fill=red!10!white,semitransparent] (1)--(2)--(m1);
	
		\draw[thick]  (m1)--(m2)--(7)--(5)--(1)--cycle;
	\draw[thick] (1)--(2)--(m1);
	

\draw[thick] (m1)--(1);
	\draw[thick] (3)--(4);
	\draw[thick] (1)--(2);
	\draw[thick, dashed] (3)--(m1);
	\draw[thick, dashed] (4)--(m1);
	\draw[thick, dashed] (3)--(m2);
	\draw[thick, dashed] (4)--(m2);
	
	\draw[thick, dotted] (m2)--(8)--(7);
	\draw[thick, dotted] (8)--(6)--(5);
	\draw[thick, dotted] (2)--(6);
	
	\foreach \x in {1,...,8}	\draw[fill=black] (\x) circle (.1cm);
	
 	\draw[fill=white] (3) circle (.1cm);
		\draw[fill=white] (4) circle (.1cm);
	\draw[fill =white] (1,2,0) circle (.1cm);
	\draw[fill=red] (m1)circle (.1cm);
	\draw[fill=red] (m2)circle (.1cm);
		
			\foreach \y in {1,...,14} \draw[fill=red] (x\y) circle (.1cm);
			\foreach \z in {1} \draw [fill=blue] (y\z) circle (.1cm);
	
	\end{tikzpicture}
	\caption{$C(\mathbf c, (2,1,2))$.}
\end{subfigure}
\caption{Twisted cubes for $G = \SL(3,\C)$ and $\mathbf i = (1,2,1)$. }\label{fig_TC_121_example}
\end{figure}
\end{example}

We now present a corollary of Theorem~\ref{theorem:main} which is already observed by Harada and the author~\cite{HaradaLee}. In order to state the corollary, it is useful to introduce some terminology in the paper~\cite{HaradaLee}. We call a word $\mathbf i = (i_1,\dots,i_n)$ is a \defi{diagram walk} if $d(i_j, i_{j+1}) = 1$ for all $1 \leq j < n$. For a dominant weight $\lambda = \sum_{i=1}^r \lambda_r \varpi_r$, we say $\mathbf i = (i_0,i_1,\dots,i_n)$ is a \defi{hesitant $\lambda$-walk} if $i_0 = i_1$, the subword $(i_1,\dots,i_n)$ is a diagram walk, and $\lambda_{i_n} > 0$. Lastly, we say $\mathbf i$ is \defi{hesitant-$\lambda$-walk-avoiding} if there is no subword which is a hesitant $\lambda$-walk.  With these expressions, we present the following corollary. 
\begin{corollary}[{see~\cite[Theorem~2.9]{HaradaLee}}]\label{corollary_HL}
	Let $G$ be a complex simply-connected semisimple algebraic group of rank $r$.
	Let $\mathbf i =  (i_1,i_2,\dots,i_n) \in [r]^n$ be a word and let $\lambda$ be a dominant weight. Let $\mfc = \{c_{jk}\}$ be determined from $\mathbf i$ as in~\eqref{eq:def cjk rep} and $\mfl = (\ell_1,\dots,\ell_n)$ is given by Lemma~\ref{lemma_ellj_and_lambda_ij}. Then the corresponding Grossberg--Karshon twisted cube $(C(\mfc, \mfl), \rho)$ is untwisted if and only if $\mathbf i$ is hesitant-$\lambda$-walk-avoiding.
\end{corollary}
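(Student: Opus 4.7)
The plan is to derive Corollary \ref{corollary_HL} from Theorem \ref{theorem:main}. By Lemma \ref{lemma_ellj_and_lambda_ij}, we have $\ell_j = \lambda_{i_j}$ in the setup of the corollary, so Theorem \ref{theorem:main} reduces the claim to the combinatorial equivalence: a word $\mathbf i$ is hesitant-jumping-$\mfl$-walk-avoiding if and only if it is hesitant-$\lambda$-walk-avoiding. I will prove both directions by contrapositive, manufacturing a forbidden subword of one kind from a forbidden subword of the other. The crucial feature throughout is that the Dynkin diagram of a simply-laced group is a tree.

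For the $(\Leftarrow)$ direction, start with a hesitant $\lambda$-walk $(i_{j_0}, i_{j_1}, \ldots, i_{j_s})$ in $\mathbf i$; so $i_{j_0} = i_{j_1}$, the tail $(i_{j_1}, \ldots, i_{j_s})$ is a diagram walk, and $\lambda_{i_{j_s}} > 0$. The plan is to extract the unique geodesic $w_0 = i_{j_1}, w_1, \ldots, w_p = i_{j_s}$ in the Dynkin tree between the endpoints. Any walk in a tree must traverse every edge of this geodesic, so each $w_l$ appears in the diagram walk; using the fact that deleting $w_l$ separates $w_0$ from $w_{l+1}$, the first-appearance times $t_0 < t_1 < \cdots < t_p$ are strictly increasing, and hence $(w_0, \ldots, w_p)$ is itself a subword of $(i_{j_1}, \ldots, i_{j_s})$. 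Prepending $i_{j_0}$ yields the hesitant jumping walk $(i_{j_0}, w_0, \ldots, w_p)$, for which the $\mfl$-inequality simplifies (using $\ell_j = \lambda_{i_j}$ and $i_{j_0} = w_0$) to $0 < \sum_{l=0}^p \lambda_{w_l}$; this holds by dominance of $\lambda$ together with $\lambda_{w_p} > 0$.

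For the $(\Rightarrow)$ direction, start with a hesitant jumping $\mfl$-walk $(i_{j_0}, i_{j_1}, \ldots, i_{j_s})$. The $\mfl$-inequality translates to $\sum_{k=1}^s \lambda_{i_{j_k}} > 0$, so some index $k \geq 1$ has $\lambda_{i_{j_k}} > 0$. The key structural input is that $\{i_{j_1}, \ldots, i_{j_s}\}$ is a set of distinct vertices spanning a connected subtree $T$ of the Dynkin diagram, and the parent function implicit in the jumping walk (each new vertex adjacent to some earlier one) realizes the very same tree $T$ rooted at $i_{j_1}$. Tracing parents back from $i_{j_k}$ to $i_{j_1}$ then produces the Dynkin geodesic $v_0 = i_{j_1}, v_1, \ldots, v_m = i_{j_k}$, whose positions in $\mathbf i$ are automatically strictly increasing because each parent occurs before its child in the walk. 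Prepending $i_{j_0}$ gives a hesitant $\lambda$-walk subword of $\mathbf i$, since consecutive $v_l$ are adjacent by construction and $\lambda_{v_m} = \lambda_{i_{j_k}} > 0$.

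The main technical obstacle I anticipate is the structural identification in the $(\Rightarrow)$ direction: that the abstract rooted tree determined by the parent function of the jumping walk coincides with the Dynkin-spanned subtree $T$. A choice of parent is not a priori unique, but a counting argument (both trees have $s$ vertices and $s-1$ edges, with the parent-tree edges contained in $T$) forces equality. With this identification secured, the increasing ordering on the geodesic as a subword of $\mathbf i$ becomes automatic, and the rest of the argument is combinatorial bookkeeping.
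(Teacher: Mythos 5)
Your reduction is the same as the paper's: invoke Theorem~\ref{theorem:main} and Lemma~\ref{lemma_ellj_and_lambda_ij} to reduce the corollary to the combinatorial equivalence ``hesitant-jumping-$\mfl$-walk-avoiding $\iff$ hesitant-$\lambda$-walk-avoiding,'' and then prove both directions of the contrapositive. Where you differ is in the level of care, and your extra care is actually warranted. In the direction where a hesitant $\lambda$-walk is given, the paper simply asserts that ``a diagram walk is a jumping walk''; this is false as stated, since a diagram walk may revisit a vertex (e.g.\ $(1,2,1)$ in type $A_2$), in which case it fails the condition $d(i_j,\{i_1,\dots,i_{j-1}\})=1$. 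Your geodesic extraction in the Dynkin tree --- taking the unique simple path from $i_{j_1}$ to $i_{j_s}$, observing that a walk in a tree must cross each geodesic edge so the first-appearance times of the geodesic vertices are strictly increasing, and then verifying the $\mfl$-inequality via $\ell_j=\lambda_{i_j}$, dominance, and $\lambda_{i_{j_s}}>0$ --- is precisely what is needed to repair this step. In the other direction the paper's phrase ``by the definition of jumping walk, one can always find a subword which is a hesitant $\lambda$-walk'' (moreover the paper writes ``ending at $i_{j_s}$'' where $i_{j_t}$ with $\lambda_{i_{j_t}}>0$ is meant) is left as an assertion; your parent-tree argument, together with the edge-counting observation identifying the parent tree with the induced Dynkin subtree $T$, supplies the proof, including the fact that parents occur earlier than children so the extracted geodesic is an increasing subword of $\mathbf i$. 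In short: same strategy as the paper, but a more rigorous execution, and the added rigor in the $\lambda$-walk $\Rightarrow$ jumping-$\mfl$-walk direction is not optional.
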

\begin{proof}
	By Theorem~\ref{theorem:main}, it is enough to show that the word $\mathbf i$ is hesitant-jumping-$\mfl$-walk-avoiding if and only if it is hesitant-$\lambda$-walk-avoiding.  
	We prove the contrapositive of the claim, that is, we will prove that $\mathbf i$ has a subword which is a hesitant jumping $\mfl$-walk if and only if it has a subword which is a hesitant $\lambda$-walk.

	We now suppose that $\mathbf i$ has a subword $\mathbf j = (i_{j_0},i_{j_1},\dots,i_{j_s})$  which is a hesitant $\lambda$-walk. Since a diagram walk is a jumping walk, $\mathbf j$ is a hesitant jumping walk. Moreover, the condition $i_{j_0} = i_{j_1}$ implies that 
$\ell_{j_0} = \ell_{j_1}$
	 by Lemma~\ref{lemma_ellj_and_lambda_ij}, and the condition $\lambda_{i_{j_s}} > 0$ indicates that $\ell_{j_s} = \lambda_{i_{j_s}} >0$. Therefore we get 
	 \[
	 \ell_{j_0} - \ell_{j_1} = 0 < \ell_{j_1} + \cdots + \ell_{j_s},
	 \]
	 which provides that $\mathbf j$ is a hesitant jumping $\mfl$-walk.
	 
	On the other hand, suppose that $\mathbf i$ has a subword $\mathbf j = (i_{j_0},i_{j_1},\dots,i_{j_s})$  which is a hesitant jumping $\mfl$-walk. Then, the condition $i_{j_0} = i_{j_1}$ provides $\ell_{j_0} = \ell_{j_1}$ so that we get the inequality
	\[
	0 = \ell_{j_0} - \ell_{j_1} < \ell_{j_1} + \cdots + \ell_{j_s}.
	\]
	Because $\ell_j \geq 0$ for all $j$, there exists $t \in [s]$ such that $\ell_{j_t} > 0$. 
	By the definition of  jumping walk, one can always find a subword of $\mathbf j$ which is a hesitant $\lambda$-walk starting with $i_{j_0}, i_{j_1}$ and ending at $i_{j_s}$. This proves that $\mathbf i$ has a subword which is a hesitant $\lambda$-walk, so the result follows.
\end{proof}

We enclose this section presenting one more application of Theorem~\ref{theorem:main} on the Newton--Okounkov body theory. Let $\mathbf i = (i_1,\dots,i_n) \in [r]^n$ and $\mfm = (m_1,\dots,m_n) \in \Z^n_{\geq 0}$.
Let $Z_{\mathbf i}$ and $\mathcal{L}_{\mathbf i, \mfm}$ be the Bott--Samelson variety and the line bundle on it given by $\mf i$ and $\mfm$. Suppose that $\mfc = \{c_{jk}\}$ and $\mfl$ are determined from~\eqref{eq:def cjk rep} and~\eqref{eq:def ellj rep}, and $C(\mfc, \mfl)$ is the corresponding twisted cube.
Harada and Yang~\cite{Harada-Yang:2014a} construct a valuation $\nu \colon \mathbb{C}(Z) \to \Z^n$ such that if the corresponding twisted cube $C(\mfc, \mfl)$ is untwisted then it coincides with the Newton--Okounkov body $\Delta = \Delta(Z_{\mathbf i}, \mathcal{L}_{\mathbf i, \mfm}, \nu)$ defined to be
\[
\Delta = \overline{
\text{conv} \left( 
\bigcup_{k > 0} \left\{ 
\frac{\nu(\sigma)}{k}~\colon~\sigma \in H^0(Z_{\mathbf i}, \mathcal{L}_{\mathbf i, \mathbf m}^{\otimes k}) \setminus \{0\}
\right\}
\right)
}
\]
up to a certain coordinate change (see~\cite[Theorem~3.4]{Harada-Yang:2014a} for more details). As a consequence, we get the following corollary which is related to the question (2) in~\cite[Section~5]{HaradaLee}.
\begin{corollary}
	Let $G$ be a complex simply-connected semisimple algebraic group of rank $r$. Let $\mathbf i = (i_1,\dots,i_n) \in [r]^n$ be a word and let $\mfm = (m_1,\dots,m_n) \in \Z_{\geq 0}^n$. Let $\mfc = \{c_{jk}\}$ and $\mfl  \in \Z^n$ be determined from $\mathbf i$ and $\mfm$ as in~\eqref{eq:def cjk rep} and~\eqref{eq:def ellj rep}. If the word $\mf i$ is hesitant-jumping-$\mfl$-walk-avoiding, then the twisted cube $C(\mfc, \mfl)$ coincides with the Newton--Okounkov body $\Delta (Z_{\mathbf i}, \mathcal{L}_{\mathbf i, \mfm}, \nu)$ up to a coordinate change, where $\nu$ is the valuation constructed in~\cite{Harada-Yang:2014a}.
\end{corollary}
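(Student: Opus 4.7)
The plan is to analyze untwistedness via Theorem~\ref{thm-HY}, which reduces the question to showing $m_{\sigma,j} \geq 0$ for every $\sigma \in \{+,-\}^n$ and every $j \in [n]$. Because $G$ is simply-laced, the Dynkin diagram is a tree and the Cartan integer $c_{jk} = \langle \alpha_{i_k}, \alpha_{i_j}^{\vee}\rangle$ takes only three values: $2$ when $i_j = i_k$, $-1$ when $d(i_j,i_k) = 1$, and $0$ otherwise. So the recursion~\eqref{eq:def m_sigma}, for $\sigma_j = -$, rewrites as
\begin{equation*}
m_{\sigma, j} \;=\; \ell_j \;-\; 2 \sum_{\substack{k > j,\ \sigma_k = - \\ i_k = i_j}} m_{\sigma,k} \;+\; \sum_{\substack{k > j,\ \sigma_k = - \\ d(i_k,i_j) = 1}} m_{\sigma,k},
\end{equation*}
and the theorem reduces to the claim that these quantities are all nonnegative precisely when $\mathbf{i}$ is hesitant-jumping-$\mfl$-walk-avoiding.

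For the necessity direction ($\Rightarrow$), I would argue by contraposition. Given a subword $\mathbf{j} = (i_{j_0}, i_{j_1}, \ldots, i_{j_s})$ that is a hesitant jumping $(\ell_{j_0},\ldots,\ell_{j_s})$-walk, I would define $\sigma$ by $\sigma_k = -$ exactly when $k \in \{j_0, j_1, \ldots, j_s\}$, and show $m_{\sigma, j_0} < 0$. The key combinatorial observation is that the induced subgraph $T$ on $\{i_{j_1}, \ldots, i_{j_s}\}$ is a connected subtree of the Dynkin diagram (connectivity follows from the jumping walk property by induction on $t$), and when $T$ is rooted at $i_{j_1}$, each non-root vertex $i_{j_t}$ has a unique neighbor in $\{i_{j_1},\ldots,i_{j_{t-1}}\}$ (two such neighbors would create a cycle in the tree $T$), necessarily its tree-parent, which has strictly smaller subword position. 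Consequently the recursion unfolds uniquely along root-to-vertex paths in $T$, yielding the telescoping identity $m_{\sigma, j_1} = \ell_{j_1} + \ell_{j_2} + \cdots + \ell_{j_s}$. Finally, since $i_{j_0} = i_{j_1}$, the neighbors of $i_{j_0}$ in $\{i_{j_2}, \ldots, i_{j_s}\}$ coincide with those of $i_{j_1}$, and a direct cancellation gives $m_{\sigma, j_0} = \ell_{j_0} - \ell_{j_1} - m_{\sigma, j_1}$, which is negative exactly by the hesitant jumping $\mfl$-walk inequality.

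For the sufficiency direction ($\Leftarrow$), I would again argue by contraposition: given $\sigma$ and $j_0$ with $m_{\sigma, j_0} < 0$, I would exhibit a hesitant jumping $\mfl$-walk subword of $\mathbf{i}$. Choosing $j_0$ maximal among indices with $m_{\sigma, j_0} < 0$ guarantees $m_{\sigma, k} \geq 0$ for all $k > j_0$, and the negativity then forces at least one $k$ with $\sigma_k = -$, $i_k = i_{j_0}$, and $m_{\sigma, k} > 0$. I would then recursively select $j_1 < j_2 < \cdots < j_s$ from the set of indices with $\sigma = -$, at each step choosing $j_{t+1}$ so that $i_{j_{t+1}}$ is adjacent to $\{i_{j_1}, \ldots, i_{j_t}\}$ and contributes positively through the adjacency term in the recursion for $m_{\sigma, j_t}$. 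This makes $(i_{j_1}, \ldots, i_{j_s})$ a jumping walk, and reverse-engineering the telescoping identity from the necessity direction should yield the inequality $\ell_{j_0} - \ell_{j_1} < \ell_{j_1} + \cdots + \ell_{j_s}$.

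The main obstacle lies in the sufficiency direction: the value $m_{\sigma, j_1}$ under an arbitrary $\sigma$ is \emph{not} in general $\sum_{t \geq 1}\ell_{j_t}$ for any single jumping walk, since the recursion may aggregate contributions from several branches and the adjacency terms can cancel some $\ell_k$'s. Thus the recursive selection of $j_1, \ldots, j_s$ must carefully isolate a single jumping walk whose total weight is still sufficient to force the hesitant walk inequality. My plan for overcoming this is to use an induction on $n - j_0$ (equivalently on the length of the surviving tail of $\mathbf{i}$ after $j_0$) that tracks not merely the sign of $m_{\sigma, j}$ but a finer comparison between $m_{\sigma, j}$ and the $\ell$-weights of candidate jumping walks extending past $j$.
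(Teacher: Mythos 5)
Your proposal proves the wrong statement. What you outline is a proof of Theorem~\ref{theorem:main}---the equivalence between untwistedness of $(C(\mfc,\mfl),\rho)$ and hesitant-jumping-$\mfl$-walk avoidance---and in outline it matches the paper's Section~\ref{sec:proof}: reduce to nonnegativity of the Cartier data $m_{\sigma,j}$ via Theorem~\ref{thm-HY}, construct the $\sigma$ supported on the hesitant jumping walk for necessity, and greedily grow $j_0<j_1<\cdots$ from a maximal violating index for sufficiency. But the corollary you were asked to prove is not that equivalence. It asserts that, under the hypothesis, $C(\mfc,\mfl)$ \emph{coincides with the Newton--Okounkov body} $\Delta(Z_{\mathbf i},\mathcal{L}_{\mathbf i,\mfm},\nu)$ up to a coordinate change. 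Your proposal never mentions the Newton--Okounkov body, the valuation $\nu$, or the space of sections $H^0(Z_{\mathbf i},\mathcal{L}_{\mathbf i,\mfm}^{\otimes k})$, so the conclusion of the corollary simply does not appear in your argument.

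The missing ingredient is the cited result of Harada and Yang \cite[Theorem~3.4]{Harada-Yang:2014a}: for the specific valuation $\nu$ they construct on $\C(Z_{\mathbf i})$, \emph{whenever} the Grossberg--Karshon twisted cube is untwisted it coincides with $\Delta(Z_{\mathbf i},\mathcal{L}_{\mathbf i,\mfm},\nu)$ up to a coordinate change. The corollary is then a one-line composition: hesitant-jumping-$\mfl$-walk avoidance gives untwistedness by Theorem~\ref{theorem:main}, and untwistedness gives the Newton--Okounkov identification by the Harada--Yang theorem. Neither piece should be re-derived from scratch.

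A secondary point, on your final paragraph: the ``main obstacle'' you flag in the sufficiency direction of Theorem~\ref{theorem:main} is not a real obstacle. You only need the \emph{upper bound} $m_{\sigma,j_1}\leq \ell_{j_1}+\cdots+\ell_{j_s}$, not the identity that holds in the necessity direction where $\sigma$ is supported exactly on the walk. The greedy choice of each $j_{t+1}$ (minimal index with accumulated coefficient $c_{j_1,j_{t+1}}+\cdots+c_{j_t,j_{t+1}}<0$ and $m_{\sigma,j_{t+1}}>0$) lets you discard all intermediate terms because they are nonnegative by minimality, and Lemma~\ref{lemma_sum_c_negative} forces that accumulated coefficient to equal $-1$ and the jumping walk to extend. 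So the $\ell$-weights accumulate without the branch-aggregation cancellations you were worried about, and no finer induction on $n-j_0$ is needed.
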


\section{Proof of the main theorem}
\label{sec:proof}

In this section, we will present a proof of the main theorem. 
We start with a proposition which will be used in the proof. One can see that the subsequent proposition holds only when simply-laced cases. 

\begin{proposition}\label{lemma_connected_hesitant}
	Let $G$ be a complex simply-connected semisimple algebraic group of rank $r$.
	Let $\mathbf i = (i_1,i_2,\dots,i_n) \in [r]^n$ be a word, and let $\mathbf{c} = \{c_{jk}\}$ be determined from $\mathbf i$ as in~\eqref{eq:def cjk rep}. 
	Suppose that $(j_0 < j_1 < j_2 < \cdots < j_s)$ is an increasing sequence of elements in $[n]$, and $\mathbf j = (i_{j_0},i_{j_1},\dots,i_{j_s})$ is the corresponding subword.
	Then, the sequence $(j_0 < j_1 < j_2 < \cdots < j_s)$ satisfies the condition
	\[
	\begin{split}
	&c_{j_0,j_1} = 2,\\
	&c_{j_1,j_t} + c_{j_2,j_t} + \cdots + c_{j_{t-1},j_t} = -1\quad \text{ for } 2 \leq t \leq s
	\end{split}
	\]
	if and only if	$\mathbf j$ is  a hesitant jumping walk.
\end{proposition}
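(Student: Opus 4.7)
The plan is to interpret the Cartan-integer conditions through the tree structure of the simply-laced Dynkin diagram. In simply-laced type the value $c_{jk}=\langle\alpha_{i_k},\alpha_{i_j}^\vee\rangle$ equals $2$ when $i_j=i_k$, equals $-1$ when $i_j\ne i_k$ and the two nodes are Dynkin-adjacent, and equals $0$ otherwise. Thus the condition $c_{j_0,j_1}=2$ encodes precisely the hesitation $i_{j_0}=i_{j_1}$, while each sum $\sum_{k=1}^{t-1}c_{j_k,j_t}$ equals $2a_t-b_t$, where $a_t:=\#\{k\in\{1,\dots,t-1\}:i_{j_k}=i_{j_t}\}$ counts repetitions and $b_t:=\#\{k\in\{1,\dots,t-1\}:i_{j_k}\ne i_{j_t},\ i_{j_k}\sim i_{j_t}\}$ counts distinct Dynkin-neighbors already present. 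Both directions will be proved by induction on $t$, using that the Dynkin diagrams of type $A_r$, $D_r$ and $E_r$ are trees.

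For the forward implication, suppose $\mathbf j$ is a hesitant jumping walk. The hesitation immediately gives $c_{j_0,j_1}=2$. For $t\ge 2$ the jumping-walk property forces $i_{j_1},\dots,i_{j_t}$ to be pairwise distinct and the visited set $T_{t-1}:=\{i_{j_1},\dots,i_{j_{t-1}}\}$ to be a connected subtree of the Dynkin diagram (since each step adjoins a new vertex at distance $1$ from the current set). Because the diagram is a tree, the new vertex $i_{j_t}$, lying at Dynkin-distance exactly $1$ from the connected set $T_{t-1}$, must be adjacent to \emph{exactly one} element of $T_{t-1}$; any second neighbor would close a cycle through $i_{j_t}$. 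Therefore $(a_t,b_t)=(0,1)$, so the sum equals $-1$.

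For the reverse implication, $c_{j_0,j_1}=2$ forces $i_{j_0}=i_{j_1}$, and I argue by induction on $t\ge 2$ that $(i_{j_1},\dots,i_{j_t})$ is a jumping walk, so in particular that the entries are pairwise distinct and their union is a connected Dynkin subtree. The base case $t=2$ follows from $c_{j_1,j_2}=-1$, which gives distinctness and Dynkin-adjacency of $i_{j_1}$ and $i_{j_2}$. In the inductive step the distinctness of $i_{j_1},\dots,i_{j_{t-1}}$ forces $a_t\in\{0,1\}$, and the hypothesis $2a_t-b_t=-1$ rewrites as $b_t=2a_t+1$. The case $(a_t,b_t)=(0,1)$ says that $i_{j_t}\notin T_{t-1}$ has exactly one Dynkin neighbor already in $T_{t-1}$, which extends the jumping walk as required.

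The main obstacle is excluding the remaining case $(a_t,b_t)=(1,3)$, which would force $i_{j_t}$ to be a trivalent Dynkin vertex already lying in $T_{t-1}$ together with all three of its neighbors; such a vertex exists only as the branching node of type $D$ or $E$. The plan here is to backtrack: let $k_0<t$ be the unique earlier index with $i_{j_{k_0}}=i_{j_t}$, and examine the Cartan-sum condition at the step when the last of the three neighbors was inserted into $T_{t-1}$. Using the tree structure of the visited set around the branching vertex $i_{j_{k_0}}$ and carefully bookkeeping the $+2$ contribution from $i_{j_{k_0}}$ versus the $-1$ contributions from its neighbors, one aims to show that this earlier sum cannot in fact equal $-1$, contradicting the hypothesis and completing the induction; since the branching vertex is unique in each of $D_r$, $E_6$, $E_7$, $E_8$, this reduction to a finite case analysis at the branching node should close the argument.
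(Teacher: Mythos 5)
Your forward direction is correct and essentially the same as the paper's: both reduce to the fact that a jumping walk's visited set is a connected subtree of the (tree-shaped) Dynkin diagram, so a vertex at Dynkin-distance one from it is adjacent to exactly one of its members. Your phrasing ("any second neighbor would close a cycle") is actually cleaner than the paper's proof of Lemma~\ref{lemma_distance_and_sum_of_C_integers}, which argues via "intervals" and "right-most/left-most" positions and thus tacitly assumes the type-$A$ path picture.

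Your reverse direction, however, has a genuine gap, and it is in fact unfillable because the "only if" claim of Proposition~\ref{lemma_connected_hesitant} is false in types $D$ and $E$. You correctly isolate the dangerous case $(a_t,b_t)=(1,3)$, a trivalent Dynkin vertex reappearing after all three of its neighbors have been visited, and propose to exclude it by backtracking to the step when the last neighbor entered and showing that sum cannot equal $-1$. But it can. In $D_4$ (trivalent node $2$, neighbors $1,3,4$) take the subword $(i_{j_0},\dots,i_{j_5})=(1,1,2,3,4,2)$. Then $c_{j_0,j_1}=2$, and for $t=2,3,4,5$ the sums $c_{j_1,j_t}+\cdots+c_{j_{t-1},j_t}$ are $-1, \ 0+(-1)=-1, \ 0+(-1)+0=-1$, and $(-1)+2+(-1)+(-1)=-1$ respectively, so every hypothesis of the proposition is met; yet $(i_{j_1},\dots,i_{j_5})=(1,2,3,4,2)$ is not a jumping walk because $2$ is revisited. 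So the earlier sum you hope to contradict is itself $-1$, and the reduction cannot close. (For what it is worth, the paper's own proof of the "only if" direction inherits exactly the same defect: the proof of Lemma~\ref{lemma_distance_and_sum_of_C_integers}(1) considers only two neighbors for an interior node and so overlooks the trivalent revisit, where the sum is $2-3=-1<0$.) If you want a correct statement to prove, you would need to weaken the conclusion or strengthen the hypothesis, e.g.\ impose that the sequence is minimal, or only claim the implication from hesitant jumping walk to the Cartan-sum conditions, which is the direction actually used in the necessity half of the main theorem.
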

\begin{lemma}\label{lemma_distance_and_sum_of_C_integers}
	Suppose that $G$ is  simply-laced and $(i_1,\dots,i_{n-1})$ is a jumping walk. For $i_n \in [r]$, we have the following.
	\begin{enumerate}
		\item If $d(i_n, \{i_1,\dots,i_{n-1}\}) = 0$, then 
		$\c_{i_n,i_1} +  \cdots + \c_{i_n,i_{n-1}} \geq 0$.
		\item If $d(i_n, \{i_1,\dots,i_{n-1}\}) = 1$, then
		$\c_{i_n,i_1} +  \cdots + \c_{i_n,i_{n-1}} = -1$.
		\item  If $d(i_n, \{i_1,\dots,i_{n-1}\}) > 1$, then
		$\c_{i_n,i_1} +  \cdots + \c_{i_n,i_{n-1}}  = 0$.
	\end{enumerate}
	Here, $\c_{j,k}$ are Cartan integers: 
	\begin{equation}\label{eq_Cartan_integers}
	\c_{j,k}  = \langle \alpha_j, \alpha_k^{\vee} \rangle = \begin{cases}
	2 & \text{ if } j = k, \\
	-1 & \text{ if } d(j,k) = 1, \\
	0 & \text{ otherwise}.
	\end{cases}
	\end{equation}
\end{lemma}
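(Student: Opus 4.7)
The plan is to rewrite the sum in terms of incidence counts and then run a case analysis driven by the tree structure of the Dynkin diagram. First I would observe that, because $(i_1,\dots,i_{n-1})$ is a jumping walk, the indices are pairwise distinct (as stated just after the definition) and the set $S \coloneqq \{i_1,\dots,i_{n-1}\}$ is the node set of a connected subtree of the Dynkin diagram. This follows by induction on $n$, since each new $i_j$ is adjacent, by definition of a jumping walk, to some element of the already-connected set $\{i_1,\dots,i_{j-1}\}$, and in types $A$, $D$, $E$ the underlying Dynkin diagram is a tree.

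Using the explicit formula~\eqref{eq_Cartan_integers} for the Cartan integers, the sum rewrites as
\[
\sum_{k=1}^{n-1} \c_{i_n,i_k} \;=\; 2\cdot\#\{k : i_k = i_n\} \;-\; \#\{k : d(i_k,i_n) = 1\}.
\]
Because the $i_k$ are distinct, the first count is $1$ if $i_n \in S$ and $0$ otherwise, while the second count is precisely $|N(i_n)\cap S|$, where $N(v)$ denotes the set of neighbors of $v$ in the Dynkin diagram. Case~(3) is then immediate: if $d(i_n,S) > 1$, neither $i_n$ nor any neighbor of $i_n$ lies in $S$, so both counts vanish and the sum is $0$.

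Case~(2) is where the tree property of the Dynkin diagram is decisive. If $i_n \notin S$ with $d(i_n,S) = 1$, then $i_n$ has at least one neighbor in $S$; but if it had two or more, the two edges from $i_n$ into the connected subtree $S$ would produce a cycle, contradicting the fact that the Dynkin diagrams of types $A$, $D$, $E$ are trees. Hence $|N(i_n)\cap S| = 1$ and the sum equals $-1$, as claimed.

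The main obstacle will be case~(1), where $i_n \in S$. Here the sum becomes $2 - |N(i_n)\cap S|$, so the task reduces to controlling the degree of $i_n$ inside the subtree~$S$. I expect to handle this by exploiting the very restricted branching of the simply-laced types (maximum vertex degree three, with the degree-three vertex appearing only as the unique branch point in types $D_r$ and $E_r$), together with a careful accounting of how a jumping walk can accumulate neighbors around a previously-visited vertex. This is the delicate part of the argument, and the one that truly uses the simply-laced hypothesis, in contrast to cases~(2) and~(3) which rely only on the fact that the diagram is a tree.
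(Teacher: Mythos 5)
Your reduction of the sum to $2\cdot\#\{k: i_k=i_n\} - |N(i_n)\cap S|$ is exactly the right setup, and your tree-theoretic arguments for cases (2) and (3) are correct and, in fact, tighter than the paper's own: the paper asserts that the visited set $I$ ``forms an interval'' on the Dynkin diagram and then argues case (1) by distinguishing whether $i_n$ is $\min I$, $\max I$, or an interior point of that interval --- reasoning which is really only sound when the diagram is a path. Your observations that $S$ is a connected subtree of the (acyclic) Dynkin diagram and that, when $i_n\notin S$ and $d(i_n,S)=1$, a second edge from $i_n$ into $S$ would create a cycle, are precise where the paper's ``interval'' heuristic is loose.

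The gap is that case (1) is left as a sketch, and as stated it cannot be completed. In $D_4$, node $2$ is the branch vertex adjacent to $1,3,4$; the word $(i_1,\dots,i_4)=(2,1,3,4)$ is a jumping walk, and taking $i_5=2$ gives $d(i_5,\{i_1,\dots,i_4\})=0$ yet
\[
\c_{2,2}+\c_{2,1}+\c_{2,3}+\c_{2,4} = 2-1-1-1 = -1 < 0.
\]
Your reformulation $2-|N(i_n)\cap S|$ makes the failure mode transparent: a degree-three branch point in types $D$ and $E$ can have all three of its neighbors already inside the connected subtree $S$. The paper's proof has the same defect --- its endpoint/interior dichotomy silently assumes $|N(i_n)\cap S|\le 2$, which fails precisely at a branch point --- so the inequality in part (1) needs either an extra hypothesis ruling out this configuration or a weakened conclusion, and the downstream uses in Lemma~\ref{lemma_sum_c_negative} and Proposition~\ref{lemma_connected_hesitant} (which rely on ``distance $\neq 1$ implies sum $\ge 0$'') would then need to be re-examined. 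Your instinct to single out case (1) as ``the delicate part'' that truly invokes simply-lacedness was exactly right; but the delicacy is a genuine counterexample, not a hurdle that careful degree-counting can clear.
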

\begin{proof}
	Note that since $(i_1,\dots,i_{n-1})$ is a jumping walk, the set $ I \coloneqq \{i_1,\dots,i_{n-1}\}$ forms an interval on the Dynkin diagram, i.e., if $j \in [n]$ satisfies $\min I < j < \max I$,
	then $j \in I$. 
	
	We consider the first case. Since the distance between $i_n$ and the set $I$ is zero, $i_n \in I$. Suppose that $i_n$ sits in the right-most/left-most position among $I$, i.e., $i_n = \min I$ or $i_n = \max I$. Then,
	\[
	\c_{i_n,i_1} + \cdots + \c_{i_n,i_{n-1}} = 2 -1 = 1 \geq 0,
	\]
	which is the desired inequality.
	Suppose that $i_n$ is neither maximum nor minimum of $I$. Then, we have that
	\[
	\c_{i_n,i_1} +  \cdots + \c_{i_n,i_{n-1}} = 2 -1 -1  = 0 \geq 0.
	\]	
	Consequently, we prove the claim for (1). 
	For the second and the third cases, by the definition of Cartan integers, we get the required equalities.
\end{proof}
\begin{proof}[Proof of Proposition~\ref{lemma_connected_hesitant}]
	Suppose that a subword $\mathbf j= (i_{j_0},i_{j_1},\dots,i_{j_s})$ of $\mathbf i$ is a hesitant jumping walk. Then, by the hesitant condition $i_{j_0} = i_{j_1}$, we have $c_{j_0,j_1} = 2$. Moreover, the jumping walk condition $d(i_{j_t}, \{i_{j_1},\dots,i_{j_{t-1}}\}) = 1$ and Lemma~\ref{lemma_distance_and_sum_of_C_integers} implies that
	\[
	c_{j_1,j_t} + \dots + c_{j_{t-1},j_t}
	= \c_{i_{j_t},i_{j_1}} + \dots + \c_{i_{j_t},i_{j_{t-1}}} 
	= -1,
	\] 
	which proves the ``if'' part of the proposition.
	
	Suppose that we have an increasing sequence $(j_0 < j_1 < \cdots < j_s)$ satisfies conditions. Consider the subword $\mathbf  j = (i_{j_0},i_{j_1},\dots,i_{j_s})$.  The first condition $c_{j_0,j_1} = 2$ implies that $i_{j_0} = i_{j_1}$. Accordingly, the word $\mathbf j$ hesitates at the first. When $t = 2$, the second condition becomes $c_{j_1,j_2} = -1$. Then by~\eqref{eq_Cartan_integers}, we have that $d(i_{j_2},i_{j_1}) = 1$, so that $(i_{j_0},i_{j_1},i_{j_2})$ is a hesitant jumping walk. Using an induction on $s$ and Lemma~\ref{lemma_distance_and_sum_of_C_integers}, we prove the ``only if'' part of the proposition. 
\end{proof}

\subsection{Necessity}
\label{sec:necessity}

We first prove that if $\mathbf i$ has a subword which is a hesitant jumping $\mfl$-walk, then the corresponding twisted cube is twisted. 
Suppose that $\mathbf j= (i_{j_0}, i_{j_1},\dots,i_{j_s})$ is a subword of $\mathbf i$ which is a  hesitant jumping walk. Also suppose that 
\begin{equation}\label{eq_condition_on_ell}
\ell_{j_0} - \ell_{j_1} < \ell_{j_1} + \cdots + \ell_{j_s}.
\end{equation}
Then by Proposition~\ref{lemma_connected_hesitant}, the integers $\{c_{jk}\}$ satisfying that
\begin{eqnarray}
&& c_{j_0,j_1} = 2, \label{eq_c_positive} \\
&& c_{j_1,j_t} + c_{j_2,j_t} + \cdots + c_{j_{t-1},j_t} = -1 \quad \text{ for } 2 \leq t \leq s. \label{eq_sum_c_negative} 
\end{eqnarray}
We then wish to show that $(C(\mathbf{c},\ell), \rho)$ is twisted. 
To prove that $(C(\mathbf{c},\ell),\rho)$ is twisted, it is enough find an element $\sigma$ of $\{+,-\}^n$ and an index $k \in [n]$ such that $m_{\sigma, k} < 0$.
To achieve this, we consider the element $\sigma = (\sigma_1,\dots,\sigma_n) \in \{+,-\}^n$ given by
\[
\sigma_p = \begin{cases}
-& \text{ if } p \in \{j_0,j_1,\dots,j_s\}, \\
+ & \text{ otherwise.}
\end{cases}
\]
By the definition of $m_{\sigma}$ and $m_{\sigma, p} = 0$ for $p \notin \{j_0,j_1,\dots,j_s\}$, we then have
\begin{equation}
m_{\sigma,j_t} = \ell_{j_t} - \sum_{ p \in \{j_{t+1},\dots,j_s\}} c_{j_t,p} m_{\sigma,p} \quad \text{ for } 1 \leq t \leq s. 
\end{equation}
We know that $c_{j_0,j_1} = \langle \alpha_{i_{j_0}}, \alpha_{i_{j_1}}^{\vee} \rangle = 2$ if and only if ${i_{j_0}} = {i_{j_1}}$.
Moreover, in this case we have $c_{j_0, p} = c_{j_1,p}$ for all $p$. 
From these considerations, we have:
\[
\begin{split}
m_{\sigma,j_0} &= \ell_{j_0} - \sum_{p \in \{j_1,\dots,j_s\}} c_{j_0,p} m_{\sigma, p} \\
&= \ell_{j_0} - c_{j_0,j_1} m_{\sigma, {j_1}} - \sum_{p \in \{j_2,\dots,j_s\}} c_{j_0,p} m_{\sigma,p} \\
&= \ell_{j_0} - 2\left(\ell_{j_1} - \sum_{p \in \{j_2,\dots,j_s\}} c_{j_1,p} m_{\sigma,p} \right) - \sum_{p \in \{j_2,\dots,j_s \}} c_{j_0,p} m_{\sigma,p} \\
&= \ell_{j_0} - 2\ell_{j_1} + \sum_{p \in \{j_2,\dots,j_s\}} c_{j_1,p} m_{\sigma_p}\\
&= \ell_{j_0} - \ell_{j_1} - m_{\sigma, j_1}
\end{split}
\]

We now claim that $m_{\sigma, j_0} < 0$; as already noted, this suffices to prove the theorem. In order to prove this claim, it is enough to show that
\begin{equation}\label{eq_m_j1_equality}
m_{\sigma, j_1} = \ell_{j_1} + \cdots + \ell_{j_s}.
\end{equation}
This is because if the inequality~\eqref{eq_m_j1_equality} holds, then we get
\[
\begin{split}
m_{\sigma,j_0} &= \ell_{j_0} - \ell_{j_1} - m_{\sigma,j_1} \\
&= \ell_{j_0} - \ell_{j_1} - (\ell_{j_1} + \cdots +\ell_{j_s}) \\
&< 0 \quad (\because~\eqref{eq_condition_on_ell})
\end{split}
\]
which proves the claim, so now we prove~\eqref{eq_m_j1_equality}. Using~\eqref{eq_sum_c_negative} and the definition of $m_{\sigma}$, we have that
\[
\begin{split}
m_{\sigma,j_1} &= \ell_{j_1}  - c_{j_1,j_2} m_{\sigma,j_2} - \cdots - c_{j_1,j_s} m_{\sigma, j_s} \\
&= \ell_{j_1} + (\ell_{j_2} - c_{j_2,j_3} m_{\sigma,j_3} - \cdots - c_{j_2,j_s} m_{\sigma,j_s}) - c_{j_1,j_3} m_{\sigma,j_3} - \cdots - c_{j_1,j_s} m_{\sigma,j_s} \\
&= \ell_{j_1} + \ell_{j_2} - (c_{j_2,j_3} + c_{j_1,j_3})m_{\sigma, j_3} - \cdots - (c_{j_2,j_s} + c_{j_1,j_s}) m_{\sigma,j_s} \\
&= \ell_{j_1} + \ell_{j_2} + (\ell_{j_3} - c_{j_3,j_4} m_{\sigma,j_4} - \cdots - c_{j_3,j_s} m_{\sigma, j_s}) - \cdots - (c_{j_2,j_s} + c_{j_1,j_s}) m_{\sigma,j_s} \\
&= \ell_{j_1} + \ell_{j_2} + \ell_{j_3} - (c_{j_3,j_4} + c_{j_2,j_4} + c_{j_1,j_4}) m_{\sigma,j_4} - \cdots - (c_{j_3,j_s}+c_{j_2,j_s} + c_{j_1,j_s}) m_{\sigma,j_s} \\
& = \cdots = \ell_{j_1} + \ell_{j_2} + \cdots + \ell_{j_{s-1}} - (c_{j_{s-1},j_s} + \cdots + c_{j_1,j_s}) m_{\sigma,j_s} \\
& = \ell_{j_1} + \ell_{j_2} + \cdots + \ell_{j_s}.  
\end{split}
\]
Consequently, we prove the equation~\eqref{eq_m_j1_equality}, so the necessity of the theorem follows.

\subsection{Sufficiency}
\label{sec:sufficiency}

We now prove  that twistedness implies the existence of a subword which is a hesitant jumping $\mfl$-walk. To give a proof, we prepare one lemma.

\begin{lemma}\label{lemma_sum_c_negative}
	Suppose that a sequence $(j_1 < j_2 < \cdots < j_{s-1})$ of indices defines a jumping walk $(i_{j_1},\dots,i_{j_{s-1}})$.
	If for $j_s > j_{s-1}$ we have
	\[
	c_{j_1, j_s} + c_{j_2, j_s} + \cdots + c_{j_{s-1},j_s} < 0,
	\]
	then 
	$(j_1 < j_2 < \cdots < j_{s-1} < j_s)$ also defines a jumping walk and
	$c_{j_1, j_t} + c_{j_2, j_t} + \cdots + c_{j_{t-1},j_t} =-1$.
\end{lemma}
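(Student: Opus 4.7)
The plan is to reduce both conclusions to a single application of Lemma~\ref{lemma_distance_and_sum_of_C_integers}. The hypothesis already supplies a jumping walk $(i_{j_1},\dots,i_{j_{s-1}})$ together with one additional candidate index $i_{j_s}$, which is precisely the setup of that lemma after translating the two conventions for Cartan integers.

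First I would rewrite the sum appearing in the hypothesis as a sum of Cartan integers. By \eqref{eq:def cjk rep} and \eqref{eq_Cartan_integers} we have $c_{j_t,j_s} = \langle \alpha_{i_{j_s}}, \alpha_{i_{j_t}}^{\vee} \rangle = \mathfrak{c}_{i_{j_s}, i_{j_t}}$, hence
\[
c_{j_1,j_s} + c_{j_2,j_s} + \cdots + c_{j_{s-1},j_s}
= \mathfrak{c}_{i_{j_s}, i_{j_1}} + \cdots + \mathfrak{c}_{i_{j_s}, i_{j_{s-1}}}.
\]
Now applying Lemma~\ref{lemma_distance_and_sum_of_C_integers} to the jumping walk $(i_{j_1},\dots,i_{j_{s-1}})$ with the extra index $i_{j_s}$ yields a trichotomy driven by $d \coloneqq d(i_{j_s}, \{i_{j_1},\dots,i_{j_{s-1}}\})$: the sum is $\geq 0$ when $d=0$, equals $-1$ when $d=1$, and equals $0$ when $d>1$. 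Since the hypothesis forces the sum to be strictly negative, only $d=1$ is compatible, and in that case the sum must equal exactly $-1$, which is the asserted equality.

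The same conclusion $d=1$ also upgrades the old walk to the new one: the defining condition $d(i_{j_t}, \{i_{j_1},\dots,i_{j_{t-1}}\}) = 1$ is given by hypothesis for all $t < s$, and for $t = s$ it is precisely what the lemma has just delivered, so $(j_1 < \cdots < j_{s-1} < j_s)$ indeed defines a jumping walk. I do not foresee any real obstacle; essentially all the content is that strict negativity of the sum selects the middle branch of the trichotomy in Lemma~\ref{lemma_distance_and_sum_of_C_integers}. The simply-laced hypothesis enters only implicitly, through its prior use in that lemma (in the non-simply-laced setting the off-diagonal values of $\mathfrak{c}$ could be $-2$ or $-3$ and the trichotomy would break down).
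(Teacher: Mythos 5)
Your proof is correct and takes essentially the same approach as the paper: both reduce the statement to Lemma~\ref{lemma_distance_and_sum_of_C_integers} via the identification $c_{j_t,j_s} = \c_{i_{j_s},i_{j_t}}$. The only cosmetic difference is that you argue directly (strict negativity selects the middle branch of the trichotomy), whereas the paper argues by contradiction (if $d \neq 1$ the sum is $\geq 0$, contradicting the hypothesis); these are logically interchangeable.
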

\begin{proof}
	Assume on the contrary that $(j_1 < j_2 < \cdots < j_{s-1} < j_s)$ does not define a jumping walk, i.e., $d(i_{j_s}, \{i_{j_1},\dots,i_{j_{s-1}}\}) \neq 1$. Then, by Lemma~\ref{lemma_distance_and_sum_of_C_integers}, we have that
	\[
	c_{j_1, j_s} + c_{j_2, j_s} + \cdots + c_{j_{s-1},j_s}  = 
	\c_{i_{j_s}, i_{j_1}} + \c_{i_{j_s},i_{j_2}} + \cdots + \c_{i_{j_s}, i_{j_{s-1}}} \geq 0,
	\]
	which contradicts to the assumption. As a consequence, we prove the lemma. 
\end{proof}

By Theorem~\ref{thm-HY}, there exists an element $\sigma$ of $\{+,-\}^n$ and an index $k$ such that $m_{\sigma, k} < 0$. For such a choice of $\sigma$, we may assume without loss of generality that $k$ is chosen to be the maximal such index, i.e.,  $m_{\sigma, k} < 0$ and $m_{\sigma, s} \geq 0$ for $s > k$. Recall that
\[
m_{\sigma, k} = \ell_k - \sum_{s > k}c_{ks} m_{\sigma, s}.
\]
By assumption $m_{\sigma, k} < 0$, we have that 
\[
\sum_{s > k} c_{ks} m_{\sigma, s} > \ell_k \geq 0.
\] 
Since $m_{\sigma, s} \geq 0$ for $s > k$, this implies that there exists some $p >k $ with $c_{k,p} > 0$ and $m_{\sigma, p} > 0$. Choose $j_1$ to be the minimal such index. Consequently, $c_{k,s} \leq 0$ or $m_{\sigma, s} = 0$ for all $k < s < j_1$, so we have that
\begin{equation}\label{eq_proof_1}
\begin{split}
\ell_k < \sum_{s > k} c_{ks} m_{\sigma, s} \leq c_{k,j_1} m_{\sigma, j_1} + \sum_{s > j_1} c_{k,s} m_{\sigma, s}
\end{split}
\end{equation}
By definition, we have that $c_{k,j_1} = \langle \alpha_{i_{j_1}}, \alpha_{i_{k}}^{\vee} \rangle >0$ if and only if ${i_k} = {i_{j_1}}$. Furthermore, in this case we get $c_{k,j_1} = 2$ and $c_{j_1,s} = c_{k,s}$ for all $s$. From these observations, we get:
\begin{equation}\label{eq_proof_2}
c_{k,j_1} m_{\sigma,j_1} +\sum_{s > j_1} c_{k,s} m_{\sigma,s}  = 2 \left(\ell_{j_1} - \sum_{s > j_1} c_{j_1,s} m_{\sigma,s} \right) + \sum_{s > j_1} c_{j_1,s} m_{\sigma,s}.
\end{equation}
Combining~\eqref{eq_proof_1} and~\eqref{eq_proof_2}, we have that
\begin{equation}\label{eq_proof_3}
\ell_k - \ell_{j_1} < \ell_{j_1} - \sum_{s> j_1} c_{j_1,s} m_{\sigma, s} = m_{\sigma, j_1}.
\end{equation}
First suppose $- \sum_{s > j_1} c_{j_1,s} m_{\sigma, s} \leq 0$. In this case, we have that
\[
\ell_k - \ell_{j_1} < \ell_{j_1}.
\]
Accordingly, the sequence $(j_0 = k < j_1)$ satisfies the three required conditions of hesitant jumping $\mfl$-walk, so we are done. 

On the other hand, if $- \sum_{s >j_1} c_{j_1,s} m_{\sigma, s} > 0$, we set $j_0 = k$ and define $j_2$ as follows. Since $m_{\sigma, s} \geq 0$ for $s > k$ by assumption, in order for  the summand $\sum_{s > j_1} c_{j_1,s} m_{\sigma, s} $ to be strictly negative there must exist an index $j_2 > j_1$ with $c_{j_1 j_2} < 0$ and $m_{\sigma, j_2} >0$. Note that since $c_{j_1 j_2} = \langle \alpha_{i_{j_2}}, \alpha_{i_{j_1}}^{\vee} \rangle$ we have that $c_{j_1 j_2} = -1$. Choose $j_2$ to be the minimal such index, i.e., $c_{j_1, s} \geq 0$ or $m_{\sigma, s} = 0$ for all $j_1 < s < j_2$. Then we have that
\begin{equation}\label{eq_proof_4}
\begin{split}
m_{\sigma, j_1} &= \ell_{j_1} - \sum_{s > j_1} c_{j_1,s} m_{\sigma, s} \\
&\leq \ell_{j_1} - c_{j_1 j_2} m_{\sigma, j_2} - \sum_{s > j_2} c_{j_1,s} m_{\sigma, s}  \\
&=\ell_{j_1} + m_{\sigma, j_2} - \sum_{s > j_2} c_{j_1,s} m_{\sigma,s} \\
&= \ell_{j_1} + \ell_{j_2} - \sum_{s > j_2} (c_{j_2,s} + c_{j_1,s}) m_{\sigma,s}
\end{split}
\end{equation}
If $- \sum_{s > j_2} (c_{j_2,s} + c_{j_1,s}) m_{\sigma,s} \leq 0$, then the sequence $(j_0=k < j_1 < j_2)$ satisfies the three required conditions since we get the following inequality by considering~\eqref{eq_proof_3} and~\eqref{eq_proof_4}:
\[
\ell_{j_0}- \ell_{j_1} < m_{\sigma,j_1} \leq \ell_{j_1} + \ell_{j_2}.
\]
Otherwise, i.e., $ \sum_{s > j_2} (c_{j_2,s} + c_{j_1,s}) m_{\sigma,s} < 0$, then we choose $j_3>j_2$ to be the minimal index such that $c_{j_2,s} + c_{j_1,s} < 0$ and $m_{\sigma,s} > 0$. Since $(j_1 < j_2)$ defines a jumping walk, we have that $(j_1 < j_2 < j_3)$ defines a jumping walk and $c_{j_2,j_3} + c_{j_1,j_3} = -1$ by Lemma~\ref{lemma_sum_c_negative}. Therefore the inequality~\ref{eq_proof_4} becomes to
\begin{equation}
\begin{split}
m_{\sigma,j_1} &\leq \ell_{j_1} + \ell_{j_2} + m_{\sigma, j_3} - \sum_{s > j_3}(c_{j_2,s} + c_{j_1,s}) m_{\sigma,s} \\
&= \ell_{j_1} + \ell_{j_2} + \ell_{j_3} - \sum_{s > j_3}(c_{j_3,s} + c_{j_2,s} + c_{j_1,s}) m_{\sigma,s}
\end{split}
\end{equation}
If $- \sum_{s > j_3}(c_{j_3,s} + c_{j_2,s} + c_{j_1,s}) m_{\sigma,s} \leq 0$, then the sequence $(j_0 = k < j_1 < j_2 < j_3)$ satisfies the three required conditions. Otherwise, we may repeat the above argument as many times as necessary. Since the indices $j_t$ are bounded above by~$n$, this process must stop, i.e., there must exist some $s \geq 1$ such that the sequence $j_0 < j_1 < \cdots < j_s$ found in this manner satisfies the requirements. 
Consequently, we prove the sufficiency of the theorem.


\end{document}